\documentclass[11pt,a4paper]{amsart}
\usepackage[utf8]{inputenc}
\usepackage[T1]{fontenc}
\usepackage{amssymb,amsthm,amsmath}

\usepackage{pgf,tikz}
\usepackage{pgfplots}
\usetikzlibrary{shapes.geometric, arrows}
\usepackage{url}
\usepackage{hyperref}
\usepackage{enumerate}
\usepackage{enumitem}
\usepackage[a4paper]{geometry}
\geometry{top=2.5cm, bottom=3.5cm, left=2.5cm, right=2.5cm}
\newtheorem{theorem}{Theorem}[section]
\newtheorem{lemma}[theorem]{Lemma}
\newtheorem{proposition}[theorem]{Proposition}

\newtheorem{definition}[theorem]{Definition}

\theoremstyle{definition}

\usepackage{pdfsync}
\title[Free L\'evy]{ Nonlinear free L\'evy-Khinchine formula and conformal mapping }
\author{Philippe Biane }
\address{Institut Gaspard-Monge, universit\'e Paris-Est Marne-la-Vall\'ee,
5 Boulevard Descartes, Champs-sur-Marne, 77454, Marne-la-Vall\'ee cedex 2,
France}

\DeclareMathOperator\Arg{Arg}
\def\cM{\mathcal M}
\def\cK{\mathcal K}
\def\cL{\mathcal L}

\def\T{\bf T}
\def\R{\bf R}
\def\C{\bf C}
\def\D{\bf D}

\begin{document}

\begin{abstract}
There are two natural notions of L\'evy processes in free probability: the first one has free increments with homogeneous distributions and the other has homogeneous transition probabilities \cite{B}. In the two cases  one can associate a Nevanlinna function to a free L\'evy process. The Nevanlinna functions appearing in the first notion were characterised by Bercovici and Voiculescu \cite{BV1}. I give an explicit parametrisation for the Nevanlinna functions associated with the second kind of free L\'evy processes. This gives a nonlinear
free L\'evy-Khinchine formula.
\end{abstract}

\maketitle

\section {Introduction}
The  convolution of two probability measures on the real line, $\lambda$ and $\mu$, is characterised by 
\begin{equation}\label{conv}
\int_{\bf R} f(x)\lambda*\mu(dx)=E[f(X+Y)]
\end{equation}
 for bounded continuous functions $f$, 
where $X$ and $Y$ are independent random variables, distributed as $\lambda$ and $\mu$.
A probability  distribution $\mu$ is called infinitely divisible if, for every integer $n>0$, it can be written as a convolution power
$\mu=(\mu_{1/n})^{*n}$, for some probability distribution $\mu_{1/n}$. The L\'evy-Khinchine formula gives an integral representation of the logarithm of the Fourier transform of an infinitely divisible distribution i.e. 
$$\int_{\bf R} e^{ivx}\mu(dx)=e^{\theta_\mu(v)}$$
where 
\begin{equation}\label{Lev-Kh}
\theta_\mu(v)=imv+\int_{\bf R}\frac{e^{ivy}-1-ivy}{y^2}(1+y^2)\nu(dy)
\end{equation}
 for some real number $m$ and a positive finite measure $\nu$ on $\bf R$ (the function under the integral  being extended by continuity to $y=0$).
As a consequence, there exists a convolution semigroup of measures $(\mu_t)_{t\geq 0}$ satisfying
$\mu_t*\mu_s=\mu_{s+t}$ and $\int e^{ivx}\mu_t(dx)=e^{-t\theta_\mu(v)}$.
The formula (\ref{Lev-Kh}) is an instance of Choquet's integral representation theorem for convex cones and the extreme cases
correspond to Dirac measures (for $\mu=0$), Gaussian measures ($\mu=\delta_0$) or Poisson distributions ($\mu=\delta_t,t\ne 0$).

The free convolution of two probability measures on the real line, $\lambda$ and $\mu$, defined by Voiculescu  \cite{V}, \cite{BV2}, is characterised by 
\begin{equation}\label{freeconv}
\int_{\bf R} f(x)\lambda\boxplus\mu(dx)=\tau(f(X+Y))
\end{equation}
 for bounded continuous functions $f$, 
where $X$ and $Y$ are free elements in some noncommutative probability space $(\mathcal A,\tau)$, distributed as $\lambda$ and $\mu$, see section \ref{sec_freeconv} below. The free convolution of measures can be computed using their Voiculescu transforms, which are analytic functions  defined on a domain inside the complex upper half plane, moreover one can develop a theory of free convolution which parallels the classical theory of convolution of measures and sums of independent random variables on the real line. In particular there are analogues of the Gauss and Poisson distributions as well as a notion of freely infinitely divisible distributions and a free analogue of the L\'evy-Khinchine formula \cite{BV1}. This formula  reduces to the integral representation formula for Nevanlinna functions defined for $z$ in the upper half plane, indeed the Voiculescu transform of a freely infinitely divisible measure can be expressed as:
\begin{equation}
\varphi(z)=\alpha+\int_{\bf R}{1+xz\over z-x}\nu(dx)
\end{equation} as I recall below in section  \ref{freeinfdiv}. This is the free analogue of the L\'evy-Khinchine formula (\ref{Lev-Kh}).
Associated to a  freely infinitely divisible distribution $\mu$ there is a free convolution semigroup of probability measures  $(\mu_t)_{t\geq 0}$, indexed by real times $t$, 
 satisfying
$$\mu=\mu_1;\quad \mu_s\boxplus\mu_{t}=\mu_{s+t} \ \text{for $s,t\geq 0$.}$$

For $X,Y$ as in  (\ref{freeconv})
there exists a  Markov kernel $p_{\lambda,\mu}(x,du)$ on $\bf R$ such that, for any bounded continuous functions $f,g$ one has
\begin{equation}\label{freemarkov}
\tau(f(X)g(X+Y))=\int_{\bf R}\left(\int_{\bf R} g(u)p_{\lambda,\mu}(x,du)\right)f(x)\lambda(dx).
\end{equation}
This  is   analogous   to the classical situation where $X$ and $Y$ are independent random variables: in this case one has
\begin{equation}
E(f(X)g(X+Y))=\int_{\bf R}\left(\int_{\bf R} g(u)q_{\lambda,\mu}(x,du)\right)f(x)\lambda(dx)
\end{equation}
where  the kernel is given by 
$q_{\lambda,\mu}(x,du)=(\mu*\delta_x)(du)$, the translate of $\mu$ by $x$, in particular it does not depend on $\lambda$. 

The Markov kernel $p_{\lambda,\mu}$ can be  computed in terms of the Cauchy transforms of the measures and this leads to a subordination property of these Cauchy transforms (cf \cite{B}), which I recall in section \ref{sec_subor} below. 
If $(\mu_t)_{t\geq 0}$ is a convolution semigroup of freely infinitely divisible distributions, then one can define accordingly Markov kernels $\cK_{s,t}$, for $s<t$,
corresponding to the convolution equations
$$\mu_s\boxplus\mu_{t-s}=\mu_t.$$
These kernels satisfy the Chapman-Kolmogorov equation:
$$
\cK_{s,t}\circ \cK_{t,u}=\cK_{s,u}\quad\text{for $s<t<u$}.
$$
Contrary to the case of classical convolution, the homogeneity of the increments  does not imply that the kernels are time-homogeneous, i.e. in general $\cK_{s,t}$ does not depend only on $t-s$. 
The question therefore arises of finding continuous families of measures 
\begin{equation}\label{hommeas}
\mu_t, \ \text{for}\ t\geq 0,\quad \mu_{s,t}, \ \text{for}\  s<t, \quad\text{such that}\quad
\mu_s\boxplus\mu_{s,t}=\mu_t
\end{equation} and such that the corresponding kernels $\cK_{s,t}$ depend only on $t-s$. In \cite{B} I gave a necessary and sufficient condition for a family such as (\ref{hommeas}) to correspond to a time-homogeneous transition kernel. However these conditions are hard to check and  I did not give an explicit description of all the solutions. The purpose of this paper is to give an answer to this question and in particular to give a parametrisation of all solutions, which we will call the nonlinear free L\'evy-Khinchine formula. This parametrisation has a strong geometric flavour and uses in an essential way the theory of conformal mappings of the upper half plane.

I will also consider the case of free multiplicative convolution, for which analogous results can be obtained.

This paper is organised as follows. In section 2, I recall the necessary facts from complex analysis: Cauchy transforms, Nevanlinna functions, and from free probability: free convolution, Voiculescu transform, free infinitely divisible distributions,  subordination
and the Markov property. I also state the main problem that is solved in the paper, which is to characterise free additive L\'evy functions of the second kind. In section 3, I consider primitives of Nevanlinna functions and investigate their behaviour as conformal mapping, in particular I give necessary and sufficient conditions for such functions to map the upper half-plane to a domain containing the upper half-plane, which is the crucial property needed later.
In section 4, I solve the main problem by providing an explicit characterisation of the free additive L\'evy functions of the second kind.
Finally, the case of multiplicative convolution is discussed in section 5.
\section{Preliminaries}
\subsection{Some tools from complex analysis}

\subsubsection{Cauchy and Voiculescu transforms}

The Cauchy transform of a probability measure
$\mu$ on $\bf R$ is given by 
$$G_{\mu}(\zeta)=\int_{\bf R}{1\over \zeta-u}\mu(du),$$
which  defines an analytic function of $\zeta\in {\bf C}\setminus \bf R$, such that
$G_{\mu}(\bar \zeta)=\overline{ G_\mu( \zeta)}$ and
$G_{\mu}({\bf C}^+)\subset {\bf C}^-$, where  ${\bf C}^\pm$ denote the upper and lower half planes i.e. ${\bf C}^+=\{z\in{\bf C}|\Im(z)>0\}, {\bf C}^-=-{\bf C}^+$. This function uniquely determines the measure
$\mu$. For 
$\alpha,\beta>0$, let $$\Theta_{\alpha,\beta}=
\{z=x+iy\,\vert \,y<0;\alpha y< x<-\alpha y;\vert z\vert\leq
\beta\}.$$
For every $\alpha>0$, there exists a real number
$\beta>0$ such that  the function
$G_{\mu}$ has a right inverse defined on the domain 
$\Theta_{\alpha,\beta}$, taking values in some domain of the form
$$\Gamma_{\gamma,\lambda}=\{z=x+iy\,\vert\, y>0;-\gamma y< x<\gamma
y;\vert z\vert\geq
\lambda\}$$
with $\gamma,\lambda>0 $.  Call
$K_{\mu}$ this right inverse, and let
$R_{\mu}(z)=K_{\mu}(z)-{1\over z}$. We shall also need the notations

\begin{equation}\label{cauchy1}
F_{\mu}(\zeta)={1\over G_{\mu}(\zeta)}
\end{equation} and
\begin{equation}\label{cauchy2}
\varphi_{\mu}(z)=R_{\mu}({1\over z})=F_{\mu}^{-1}(z)-z
\end{equation} where
$F_{\mu}^{-1}$ is defined in some domain of the form
$\Gamma_{\alpha,\beta}$. The function $\varphi_\mu$ is defined on the same domain as $F_\mu^{-1}$ and takes its values in
${\bf C}^-\cup\bf R$. It is called the Voiculescu transform of $\mu$.
\subsubsection{Nevanlinna functions}\label{sec_nevanlinna}

An analytic function $\varphi$, defined on ${\bf C}^+$, with values in ${\bf C}^-\cup\bf R$, is called a Nevanlinna function.
The Nevanlinna
representation  gives  real
numbers
$\alpha\leq 0,\,\beta$ and a finite positive  measure $\nu$,  on $\bf R$, such that 
\begin{equation}\label{nevanlinna}
\varphi(z)=\alpha z+\beta +\int_{\bf R}\frac{1+uz}{z-u}\nu(du).
\end{equation}
The measure $\nu$ can be recovered from $\varphi$ by
\begin{equation}\label{nevanilnna2}
\nu(du)=\lim_{\varepsilon\to 0}\frac{-\Im(\varphi(u+i\varepsilon))}{2\pi(1+u^2)}du
\end{equation}
while 
\begin{equation}\label{nevanilnna3}
\alpha=\lim_{v\to +\infty}\frac{\varphi(iv)}{iv}.
\end{equation}
Finally
$$\varphi(i)=\alpha i+\beta-i\int_{\bf R}\nu(du)$$ allows to revover all parameters.

Observe that, if $\varphi$ takes a real value at some point, then it is constant, as follows from the maximum principle.
Also the extreme points in the integral representation (\ref{nevanlinna}) correspond to the maps
$z\mapsto  \frac{1+xz}{z-x}$, which are conformal mappings from the upper half plane onto itself.

Finally we note that, if $\int |u|\nu(du)<+\infty$, then 
$\int_{-\infty}^{+\infty}\frac{1+uz}{z-u}\nu(du)\to \int u\,\nu(du)$ if $|z|\to\infty$ with $z$ in some domain $\Gamma_{\gamma,\lambda}$.

\subsection{Free convolution and freely indivisible distributions}
\subsubsection{Free convolution }\label{sec_freeconv}
We recall the definition of the free convolution of measures and how it can be computed, 
 see e.g.
\cite{BV2} for these results.
Let
$\lambda$ and $\mu$ be probability measures on $\bf R$, then there exists a
non-commutative  probability space $(A,\tau)$ and self-adjoint elements $X,Y$
affiliated to $A$, with respective   distributions $\lambda$ and $\mu$, such
that  $X$ and $Y$ are free, i.e. the von Neumann algebras generated by their
spectral projections are free. The distribution of
$X+Y$ depends only on
$\lambda$ and $\mu$, it is called the free additive convolution of $\lambda$ and
$\mu$  and is denoted by
$\lambda\boxplus
\mu$. This  defines a symmetric and associative binary operation on the set
of probability measures on $\bf R$.  The  free
additive convolution is linearised by the Voiculescu transform (\ref{cauchy2}), indeed one has
$$\varphi_{\lambda\boxplus \mu}=\varphi_{\lambda}+\varphi_{\mu}$$ on some domain of
the
form
$\Gamma_{\alpha,\beta}$ where these three functions are defined. Since
$\lambda\boxplus
\mu$ is determined by the restriction of 
$\varphi_{\lambda\boxplus
\mu}$ to one of these domains, this characterises completely the measure
$\lambda\boxplus \mu$.

\subsubsection{Processes with free increments }\label{freeincrpro}
Processes with free increments were studied in \cite{B}.
In short, a process with free increments is a family of non-commutative random variables $(X_t)_{t\geq 0}$, in an non-commutative  probability space $(A,\tau)$, such that 
for any $s<t$ the increment $X_t-X_s$ is free with the von Neumann algebra generated by the $(X_u)_{u\leq s}$ (some care is needed when the operators are unbounded and one has to use affiliated subalgebras, see \cite{B} for details).
The laws of the increments $X_t-X_s$, for $s<t$, denoted $\mu_{s,t}$ satisfy the relations 
\begin{equation}\label{freeincr}
\mu_{s,t}\boxplus\mu_{t,u}=\mu_{s,u}\quad \text{ for }\quad s<t<u.
\end{equation} Conversely, given probability distributions 
$\mu_{s,t}$ satisfying  relations (\ref{freeincr}), together with some continuity assumption and an initial distribution $\mu_0$, there exists a noncommutatif process with  free increments
distributed as $\mu_{s,t}$ \cite{B}.
\subsubsection{Free infinitely divisible distributions}\label{freeinfdiv}

There is a notion of infinitely divisible measures for the free additive
convolution: a measure $\mu$ is freely infinitely divisible if for all $n>0$ there exists $\mu_{1/n}$ such that
$\mu=(\mu_{1/n})^{\boxplus n}$.  There is also an analogue of the L\'evy-Khinchine formula, which was obtained in \cite{BV1}.
A probability measure $\mu$ on $\bf R$ is freely infinitely divisible if and only if
its Voiculescu transform $\varphi_\mu$ has
an analytic continuation to the
whole
of ${\bf C}^+$, with values in ${\bf C}^-\cup\bf R$ and one has $$\lim_{v
\rightarrow\infty,v\in\bf R}{\varphi_\mu(iv)\over iv}=0.$$ 
The Nevanlinna representation  (\ref{nevanlinna})  implies that 
\begin{equation}\label{freelevkh}
\varphi_\mu(z)=\beta+\int_{\bf R}{1+uz\over z-u}\nu(du)
\end{equation}
for some positive finite measure $\nu$, called the free L\'evy measure of $\mu$. The formula (\ref{freelevkh})
is the free analogue of the L\'evy-Khinchine formula. It expresses an arbitrary infinitely divisible distribution in terms of the Wigner semi-circle distribution (corresponding to $\nu=\delta_0$), which is the free analogue of the Gauss distribution and the Pastur-Marchenko distributions (for $\nu=\delta_t$ with $t\ne 0$), which are the free Poisson distributions. 
If $\mu$ is freely infinitely divisible
then for all $t\geq 0$  there exists a probability measure on the real line $\mu_t$ 
such that $\varphi_{\mu_t}=t\varphi_{\mu}$ and these measures satisfy the relations
$$\mu_s\boxplus\mu_t=\mu_{s+t}.$$ 
The parallel between classical and free infinitely divisible distributions goes quite far, for example one can find free analogues of the classical theory of stable distributions and domains of attractions, see \cite{BP}.

\subsection{Subordination and the Markov property}\label{sec_subor}
Given probability distributions $\mu,\nu$ on $\bf R$, there 
 exists a subordination relation between the Cauchy transforms of $\mu$ (or $\nu$) and of $\mu\boxplus \nu$. As shown in \cite{B}, this relation expresses the Markov property of the free convolution.
We recall the main theorem of \cite{B}.
\begin{theorem}\label{subor}
 Let $(A,\tau)$ be a
non-commutative probability space, $B$ be a von
Neumann subalgebra of
$A$, let
$Y\in A$
be a self-adjoint element which is free with $B$, and let 
$X\in B$ be self-adjoint. Denote by $\lambda$ and $\mu$  the distributions of
$X$  and $Y$, then there
exists a Feller Markov kernel $\cK=k(x,du)$ on $\bf R\times \bf R$ and an
analytic
function F  on 
${\bf C}\setminus \bf R$ such that  
\begin{enumerate}
\item 
For any  Borel bounded function $f$ on $\bf R$ one has $\tau(f(X+Y)\vert B)=\cK
f(X).$
\item $F(\bar\zeta)=\overline{F(\zeta)}$, \ \  $F({\bf C}^+)\subset {\bf C}^+$.
\item $Im(F(\zeta))\geq Im(\zeta)$  for $\zeta\in{\bf C}^+$.
\item
${F(iy)\over iy}\rightarrow1$ as $y\rightarrow+\infty$, $y\in\bf R$\item
for all
$\zeta\in{\bf C}\setminus
\bf R$ one has
$\int_{\bf R}(\zeta-u)^{-1}k(x,du)=(F(\zeta)-x)^{-1}.$

\item For all $\zeta\in{\bf C}\setminus \bf R$ one has
$G_{\lambda}( F(\zeta))=G_{\lambda\boxplus \mu}(\zeta)$
\end{enumerate}

\medskip

Here $\tau(.|B)$ denotes the conditional expectation, $\cK f(x)=\int_{\bf R} f(u) k(x,du)$
and the map $F$ is uniquely determined by properties $(4)$ and
$(6)$.
\end{theorem}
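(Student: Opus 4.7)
The plan is to build the analytic subordination function $F$ first, read off the Markov kernel from it via property (5), and finally verify the Markov property (1) through a resolvent identity and freeness.

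For the construction of $F$, I start from the Voiculescu linearisation $\varphi_{\lambda\boxplus\mu}=\varphi_\lambda+\varphi_\mu$. Rewriting $\varphi_\mu(z)=F_\mu^{-1}(z)-z$ turns this into $F_{\lambda\boxplus\mu}^{-1}(z)=F_\lambda^{-1}(z)+\varphi_\mu(z)$; substituting $z=F_{\lambda\boxplus\mu}(\zeta)$ and solving for $F_\lambda^{-1}(F_{\lambda\boxplus\mu}(\zeta))$ suggests the definition
\[
F(\zeta):=F_\lambda^{-1}\bigl(F_{\lambda\boxplus\mu}(\zeta)\bigr)=\zeta-\varphi_\mu\bigl(F_{\lambda\boxplus\mu}(\zeta)\bigr),
\]
valid on a cone $\Gamma_{\gamma,\lambda}$ where $F_{\lambda\boxplus\mu}(\zeta)$ lies in the domain of $\varphi_\mu$. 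Property (6) follows from $G_\lambda\circ F_\lambda^{-1}=1/\mathrm{id}$, (2) from Schwarz reflection, (3) from the fact that $\varphi_\mu$ takes values in $\C^-\cup\R$ (so $\Im(-\varphi_\mu)\geq 0$), and (4) from the asymptotics $\varphi_\mu(iy)/iy\to 0$ combined with $F_{\lambda\boxplus\mu}(iy)/iy\to 1$. To extend $F$ to the whole upper half plane, I would exploit the inequality $\Im F(\zeta)\geq\Im\zeta$ (which prevents the image from escaping $\C^+$) together with the global definition of $G_{\lambda\boxplus\mu}$ on $\C^+$, propagating $F$ by analytic continuation; equivalently, $F-\mathrm{id}$ is Nevanlinna on the initial cone and is identified with its globally defined Nevanlinna extension.

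Given $F$ on $\C^+$ satisfying (2)--(4), the kernel is defined by requiring that, for each $x\in\R$, the Cauchy transform of $k(x,\cdot)$ equal $\zeta\mapsto(F(\zeta)-x)^{-1}$. By (3), $F(\zeta)-x\in\C^+$, hence $(F(\zeta)-x)^{-1}\in\C^-$; by (4), $iy\cdot(F(iy)-x)^{-1}\to 1$. These are precisely the conditions that force the existence of a unique probability measure with this Cauchy transform, giving $k(x,\cdot)$ and property (5) by construction. Joint continuity of $(x,\zeta)\mapsto(F(\zeta)-x)^{-1}$ and a Stone--Weierstrass argument yield the Feller property.

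For the Markov property (1), by the density of resolvents in $C_0(\R)$ it suffices to show, for all $b\in B$ and $\zeta\in\C\setminus\R$,
\[
\tau\bigl(b\,(\zeta-X-Y)^{-1}\bigr)=\tau\bigl(b\,(F(\zeta)-X)^{-1}\bigr),
\]
i.e.\ that $\tau\bigl((\zeta-X-Y)^{-1}\,\big|\,B\bigr)=(\tilde F(\zeta)-X)^{-1}$ for some scalar function $\tilde F$ of $\zeta$ alone. I would derive this from the resolvent expansion
\[
(\zeta-X-Y)^{-1}=(\zeta-X)^{-1}+(\zeta-X)^{-1}\,Y\,(\zeta-X-Y)^{-1},
\]
by applying $\tau(\,\cdot\,|B)$ and exploiting freeness of $Y$ from $B$ to reduce the $B$-valued moments of alternating products $X^{a_1}YX^{a_2}Y\cdots$ to scalar Cauchy-transform-type factors times resolvents in $X$; this yields a fixed-point equation whose solution has the claimed resolvent form. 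Applying $\tau$ on both sides and invoking (6) identifies $\tilde F$ with $F$. Uniqueness of $F$ is then routine: (6) determines $F(\zeta)$ up to a branch of $G_\lambda^{-1}$ and (4) fixes that branch at infinity, after which analytic continuation propagates the choice over $\C^+$. The main obstacle is precisely this operator-valued subordination identity, namely showing that the $B$-valued conditional expectation of the resolvent is itself a resolvent function of $X$ alone; this is the combinatorial heart of the theorem and requires freeness in an essential way, most cleanly expressed via alternating moment formulas for free pairs or, equivalently, operator-valued free cumulants.
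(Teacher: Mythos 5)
Note first that this paper does not prove Theorem \ref{subor} at all: it is quoted from \cite{B} (``We recall the main theorem of \cite{B}''), so the only meaningful comparison is with the proof given there. Your overall architecture --- construct the subordination function $F$, read the kernel $k(x,du)$ off the Nevanlinna-type characterisation of reciprocal Cauchy transforms via properties (2)--(4), then establish the Markov property --- is the right skeleton and agrees in outline with \cite{B}. The step recovering $k(x,\cdot)$ as a probability measure from $\zeta\mapsto (F(\zeta)-x)^{-1}$ is correct and essentially complete as you state it.

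The genuine gap is that you have deferred the entire content of the theorem to the one step you do not carry out: the assertion that $\tau\bigl((\zeta-X-Y)^{-1}\mid B\bigr)$ is again a resolvent $(\tilde F(\zeta)-X)^{-1}$ with $\tilde F$ a \emph{scalar} function of $\zeta$. Saying that freeness ``yields a fixed-point equation whose solution has the claimed resolvent form'' restates the goal rather than proving it; the alternating-moment computation (or its operator-valued reformulation) is where all the work lies, and for unbounded $X$, $Y$ one cannot even expand the resolvent in moments, so a truncation or approximation argument is also needed, as in \cite{B}. A second, more structural problem is the order of construction. You define $F=F_\lambda^{-1}\circ F_{\lambda\boxplus\mu}$ only on a truncated cone $\Gamma_{\gamma,\lambda}$ and then appeal to ``its globally defined Nevanlinna extension''; but a holomorphic function satisfying $\Im F(\zeta)\geq\Im\zeta$ on a cone has no a priori analytic continuation to ${\bf C}^+$ --- the Nevanlinna representation is available only for functions already defined on the whole half-plane, so this step is circular. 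The way out (and the route taken in \cite{B}) is to reverse your order: the conditional-expectation identity produces $F$ globally on ${\bf C}\setminus{\bf R}$ in the first place, properties (2)--(5) follow from that operator identity, and the formula $F=F_\lambda^{-1}\circ F_{\lambda\boxplus\mu}$ is then merely an identity on a cone used to deduce (6) from Voiculescu's linearisation.
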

Property (1) above is the Markov property of free convolution while 
 (6) is the subordination property relating the Cauchy transforms of $\lambda$ and $\lambda\boxplus\mu$.

Using Theorem \ref{subor} we get, for each process with free increments, as in section \ref{freeincrpro}, a family of Markov kernels $\cK_{s,t};s<t$ on the real line, satisfying the Chapman-Kolmogorov relation
\begin{equation}\label{chapkol}
\cK_{s,t}\circ \cK_{t,u}=\cK_{s,u}\quad\text{for $s<t<u$}.
\end{equation}
These kernels are determined, using eq.(5), by analytic functions $F_{s,t}$ mapping the upper halfplane to itself and satisfying 
\begin{equation}\label{chapkolF}
F_{s,t}\circ F_{t,u}=F_{s,u}\quad\text{for $s<t<u$}.
\end{equation}
 We call such a family of kernels {\sl time homogeneous} if $\cK_{s,t}\equiv \cK_{t-s}$ (or equivalently $F_{s,t}\equiv F_{t-s}$) depends only on $t-s$. 
If this is the case then the kernels $\cK_{t}$ form a semigroup (and the maps $F_t$ form a semigroup of analytic maps on ${\bf C}^+$).

As is easily seen on examples, see e$.$g$.$ section 5 of \cite{B}, in general the kernels $\cK_{s,t}$ are not time homogeneous, when the increments are i$.$e$.$ when $\mu_{s,t}\equiv \mu_{t-s}$. It is therefore natural to ask whether there exists processes with non homogeneous free increments and with time homogeneous transition probabilities. In \cite{B} a characterisation was given in the following theorem.
 \begin{theorem}\label{freenllevy}
 Let $\mu_t,t\geq 0$ and 
$(\mu_{s,t})_{s<t\in\bf R_+}$  be families of probability measures satisfying 
\begin{equation}\label{sg}
\mu_s\boxplus\mu_{s,t}=\mu_t;\quad \mu_{s,t}\boxplus\mu_{t,u}=\mu_{s,u}
\end{equation}
for all $s<t<u$. Let 
$(\cK_{s,t})_{s<t\in\bf R_+}$ be the corresponding Markov transition functions on $\bf R$.
Assume that the kernels are time homogeneous, then 
 the kernels
$\cL_{t}\equiv\cK_{0,t}$ for $t\geq 0$, 
 form a Feller Markov
semi-group. Let $F_{s,t}$ be the analytic functions  associated to the
kernels $\cK_{s,t}$ by Theorem \ref{subor}. The maps $F_{t}\equiv F_{0,t}$, where
$F_0$ is the identity function, form a continuous semigroup, under composition, of
analytic transformations of
${\bf C}^+$ and $F_{s,t}=F_{t-s}$, moreover there exists a  
Nevanlinna function $\varphi$ such
that 

\begin{equation}\label{Nevinfini}
\frac{\varphi(\zeta)}{\zeta}\to_{\genfrac{}{}{0pt}{3}{\zeta\to\infty}{\zeta\in\Gamma_{\alpha,\beta}}} 0
\end{equation}
 in every domain of the form
$\Gamma_{\alpha,\beta}$ and such that the maps 
$F_t$, for $t\geq 0$, satisfy the differential equation 
\begin{equation}\label{diff}
\frac{\partial F_t}{
\partial t}+\varphi(F_t)=0,\qquad F_0(z)=z.
\end{equation}

 Conversely, let  $\varphi$ be a Nevanlinna function satisfying
(\ref{Nevinfini}) in some domain of the form
$\Gamma_{\alpha,\beta}$, and let
$(F_t)_{t\in\bf R_+}$ be the semi-group of analytic maps of ${\bf C}^+$ obtained
by solving (\ref{diff}) with initial condition $F_0(z)=z$, then there exists
 $(\mu_t)_{t\geq 0}$ and 
$(\mu_{s,t})_{s<t\in\bf R_+}$  families of probability measures satisfying (\ref{sg}) with associated semi-group of maps $(F_t)_{t\in\bf R_+}$, if and
only if, for every
$t>0$ the function
$\varphi\circ F_t^{-1}\circ F_{\mu_0}^{-1}$  has an analytic continuation to
$\bf C^+$, with values in $\bf C^-$.

\end{theorem}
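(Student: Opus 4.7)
My plan is to handle the two implications separately. For the forward direction, the key observation is that the time-homogeneity $\cK_{s,t}=\cK_{t-s}$ combined with the Chapman--Kolmogorov relations (\ref{chapkol})--(\ref{chapkolF}) immediately forces the families $\cL_t:=\cK_{0,t}$ and $F_t:=F_{0,t}$ to be one-parameter semigroups (of Markov kernels and of analytic self-maps of $\C^+$ respectively). The Feller property of $(\cL_t)$ follows from the weak continuity in $t$ of the family $(\mu_{s,t})$ built into the processes-with-free-increments framework recalled in section \ref{freeincrpro}, transported through property (5) of Theorem \ref{subor}. To extract the generator $\varphi$, I would differentiate the semigroup identity $F_{t+h}=F_h\circ F_t$ at $h=0$; in standard Berkson--Porta/Loewner fashion for continuous semigroups of holomorphic self-maps of $\C^+$, the limit $\varphi(z):=-\lim_{h\to 0^+}(F_h(z)-z)/h$ exists and yields the ODE (\ref{diff}). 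That $\varphi$ is Nevanlinna follows from property (3) of Theorem \ref{subor}: each $F_h$ increases imaginary parts, so $(F_h(z)-z)/h$ has nonnegative imaginary part, forcing $\Im\varphi\le 0$ on $\C^+$; the asymptotic $F_h(iy)/iy\to 1$ (property (4)) passes to the infinitesimal scale and yields (\ref{Nevinfini}).

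For the converse, given a Nevanlinna $\varphi$ with (\ref{Nevinfini}), the ODE $\partial_tF_t+\varphi(F_t)=0$, $F_0=\mathrm{id}$, has a global solution in $t\ge 0$ consisting of holomorphic self-maps of $\C^+$: local holomorphic existence is classical, and $\Im\varphi\le 0$ gives $\partial_t\Im F_t(z)\ge 0$, so the solution cannot exit $\C^+$ and persists for all positive times. The condition (\ref{Nevinfini}) further ensures $F_t(iy)/iy\to 1$. I would then define $\mu_t$ by $F_{\mu_t}:=F_{\mu_0}\circ F_t$. Since both $F_{\mu_0}$ and $F_t$ map $\C^+$ into itself with the asymptotic $F(iy)/iy\to 1$, the same is true of the composition, which is therefore the reciprocal Cauchy transform of a unique probability measure $\mu_t$. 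The relation $\mu_s\boxplus\mu_{s,t}=\mu_t$ is then equivalent at the Voiculescu-transform level to $\varphi_{\mu_{s,t}}=\varphi_{\mu_t}-\varphi_{\mu_s}$ on a common domain $\Gamma_{\alpha,\beta}$.

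The heart of the matter is to decide when this locally defined $\varphi_{\mu_{s,t}}$ is the Voiculescu transform of an actual probability measure on $\R$. Exploiting $F_{\mu_{t+h}}=F_{\mu_t}\circ F_h$ (which is property (6) of Theorem \ref{subor} combined with time-homogeneity) gives $F_{\mu_{t+h}}^{-1}=F_h^{-1}\circ F_{\mu_t}^{-1}$, and since $\partial_h F_h^{-1}\bigr|_{h=0}=\varphi$, one obtains
\begin{equation*}
\lim_{h\to 0^+}\frac{\varphi_{\mu_{t,t+h}}(z)}{h}=\varphi\bigl(F_{\mu_t}^{-1}(z)\bigr)=\varphi\circ F_t^{-1}\circ F_{\mu_0}^{-1}(z).
\end{equation*}
So $\varphi\circ F_t^{-1}\circ F_{\mu_0}^{-1}$ is precisely the infinitesimal Voiculescu transform of the increment process at time $t$, and the full increment $\varphi_{\mu_{t,t+h}}$ is an $h$-integral of such pieces. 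The condition that $\varphi\circ F_t^{-1}\circ F_{\mu_0}^{-1}$ extends to $\C^+$ with values in $\C^-$ is therefore the natural requirement ensuring that each integrated $\varphi_{\mu_{s,t}}$ is itself a Nevanlinna function on all of $\C^+$ and hence the Voiculescu transform of a genuine (in fact freely infinitely divisible) probability measure, by the Bercovici--Voiculescu characterisation \cite{BV1} recalled in section \ref{freeinfdiv}. The main technical obstacle will be making this equivalence precise in both directions: passing from a one-parameter family of Nevanlinna infinitesimal generators to a Nevanlinna antiderivative via a careful integration argument, and conversely recovering the Nevanlinna property of the infinitesimal generator from the mere existence of $\mu_{s,t}$ by letting $t-s\to 0^+$.
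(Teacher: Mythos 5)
First, a point of reference: the paper itself gives no proof of this statement. Theorem \ref{freenllevy} is quoted from \cite{B} (``In \cite{B} a characterisation was given in the following theorem''), so there is no in-paper argument to compare against line by line. Your outline is nonetheless consistent with the strategy of \cite{B} and with the machinery the present paper deploys in section 4: the semigroup property of $F_t$ from time-homogeneity together with (\ref{chapkolF}); the Berkson--Porta type generator with $\Im\varphi\le 0$ extracted from property (3) of Theorem \ref{subor}; the conjugation of the flow to a horizontal translation via a primitive of $-1/\varphi$ (which is exactly how the paper later manipulates $F_t$ and $F_t^{-1}$ in sections \ref{direct} and \ref{converse}); and, crucially, the identification $\lim_{h\to 0^+}\varphi_{\mu_{t,t+h}}/h=\varphi\circ F_t^{-1}\circ F_{\mu_0}^{-1}$ together with the integral representation $\varphi_{\mu_{s,t}}=\int_s^t\varphi\circ F_u^{-1}\circ F_{\mu_0}^{-1}\,du$, which makes the sufficiency of the extension hypothesis transparent.

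The one place where your sketch has a genuine hole is the necessity half of the final equivalence. You propose to recover the Nevanlinna property of $\varphi\circ F_t^{-1}\circ F_{\mu_0}^{-1}$ ``by letting $t-s\to 0^+$'', but the mere existence of the probability measures $\mu_{t,t+h}$ only provides $\varphi_{\mu_{t,t+h}}$ on a truncated cone $\Gamma_{\alpha,\beta}$; the Voiculescu transform of a general probability measure does not extend to all of ${\bf C}^+$, so you cannot take a normal limit of $\varphi_{\mu_{t,t+h}}/h$ on ${\bf C}^+$ without first knowing that each $\mu_{t,t+h}$ is freely infinitely divisible. That extra input does hold: each $\mu_{s,t}$ embeds in the two-parameter family (\ref{sg}) and is therefore an $n$-fold free convolution of infinitesimal factors for every $n$, hence freely infinitely divisible by a Bercovici--Pata type limit theorem (cf.\ \cite{BP}), and so its Voiculescu transform does continue to ${\bf C}^+$ with values in ${\bf C}^-\cup{\bf R}$; the class of such functions satisfying (\ref{Nevinfini}) is then closed under the locally uniform limit $h\to 0^+$, which closes the argument. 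You name this step as ``the main technical obstacle'' but do not supply the infinite-divisibility input that resolves it. Everything else in your plan --- global solvability of (\ref{diff}) in ${\bf C}^+$ because $\Im\varphi\le 0$ and $|\varphi(z)|=o(|z|)$, the fact that $F_{\mu_0}\circ F_t$ is again the reciprocal Cauchy transform of a probability measure, and the equivalence of (\ref{sg}) with $\varphi_{\mu_{s,t}}=\varphi_{\mu_t}-\varphi_{\mu_s}$ on a common $\Gamma_{\alpha,\beta}$ --- is correct and routine.
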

The Nevanlinna functions having the properties listed in Theorem \ref{freenllevy} have been called {\sl free additive L\'evy functions of the second kind} (or FAL2) in \cite{B}. One can easily check, by explicit computations \cite{B}, that the functions $z\mapsto -z^\rho$ with $0<\rho<1$ are FAL2 functions while the Nevanlinna functions $z\mapsto z^\theta$, for $-1<\theta<0$, are not.

The characterisation of FAL2 functions in this theorem is rather indirect, it  is not easy to check moreover it does not provide a nice parametrisation of the set of FAL2 functions.
In the following we shall show that one can give a more explicit parameterisation these functions, at least in the case $\mu_0=\delta_0$. For this we use properties of primitives of Nevalinna functions, as explained in the next section, as well as classical results  on starlike domains in conformal mapping theory. We call this parametrisation the {\sl nonlinear free L\'evy-Khinchine formula} since, as we shall see the set of FAL2 functions can be parametrised by a convex set, up to some non-linear transformation.
\section{Conformal mappings associated with  Nevanlinna functions}
\subsection{Primitives of Nevalinna functions}
Let $\psi$ be a Nevalinna function and $\Psi=-\int\psi(z)dz$ be a primitive of $-\psi$, which is holomorphic on ${\bf C}^+$.

\begin{lemma}\label{univalent}
If $\psi\ne 0$ then the function  $\Psi$ is univalent on ${\bf C}^+$.
\end{lemma}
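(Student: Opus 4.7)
The plan is to reduce the statement to the classical fact that a holomorphic function on a convex domain whose derivative lies in an open half-plane (not containing $0$) is automatically univalent, and then apply this to $-\psi$ on ${\bf C}^+$.

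First I would separate out the degenerate case. By the remark following the Nevanlinna representation in section \ref{sec_nevanlinna}, if $\psi$ attains a real value anywhere in ${\bf C}^+$ then $\psi$ is constant. Since we assume $\psi\ne 0$, this constant must be a nonzero real number $c$, and then $\Psi(z)=-cz+\mathrm{const}$ is affine with nonzero slope, hence trivially univalent. So from now on I may assume $\psi({\bf C}^+)\subset{\bf C}^-$ strictly, i.e.\ $\Im\Psi'(z)=-\Im\psi(z)>0$ for every $z\in{\bf C}^+$.

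The key step is then the usual convex-domain argument. Suppose for contradiction that $\Psi(z_1)=\Psi(z_2)$ for two distinct points $z_1,z_2\in{\bf C}^+$. Because the upper half-plane is convex, the segment $z(t)=z_1+t(z_2-z_1)$, $t\in[0,1]$, lies entirely in ${\bf C}^+$, so I can write
$$
0=\Psi(z_2)-\Psi(z_1)=(z_2-z_1)\int_0^1\Psi'(z(t))\,dt.
$$
Dividing by $z_2-z_1\ne 0$ yields $\int_0^1\Psi'(z(t))\,dt=0$. But $t\mapsto\Im\Psi'(z(t))$ is continuous and strictly positive on $[0,1]$, so its integral is strictly positive; in particular $\int_0^1\Psi'(z(t))\,dt$ has strictly positive imaginary part and cannot vanish. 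This contradiction proves injectivity of $\Psi$ on ${\bf C}^+$.

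There is essentially no obstacle; the only point that needs a little care is handling the possibility that $\psi$ touches the real axis, which is exactly the case where $-\psi$ fails to sit in an \emph{open} half-plane. This is resolved cleanly by invoking the maximum-principle observation from section \ref{sec_nevanlinna} to conclude that such $\psi$ must be a real constant, and then checking the affine case by hand. Once that is disposed of, the integration-along-segment argument, enabled by the convexity of ${\bf C}^+$, does all the work.
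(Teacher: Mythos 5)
Your proof is correct and follows essentially the same route as the paper: dispose of the case where $\psi$ takes a real value (hence is a real constant, giving an affine $\Psi$) via the maximum-principle remark, and otherwise integrate $\Psi'=-\psi$ along the segment joining two points of the convex domain ${\bf C}^+$ to see that the difference quotient has strictly positive imaginary part. The paper's proof is exactly this two-case segment-integration argument, just stated more tersely.
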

\begin{proof}
If $\psi$ is real, then it is constant and $\Psi(z)=az+b$ for some $a\ne 0$ therefore the claim is clear. If not then $\Im(\psi(z))<0$ for all $z\in {\bf C}^+$.
For $z_1\ne z_2$ in ${\bf C}^+$ one has  
$$\frac{\Psi(z_2)-\Psi(z_1)}{z_2-z_1}=-\int_0^1\psi(z_1+t(z_2-z_1))dt$$ therefore 
$\Im(\frac{\Psi(z_2)-\Psi(z_1)}{z_2-z_1})>0$. \end{proof}

\medskip
It follows from Lemma \ref{univalent} that $\Psi$ maps conformally  ${\bf C}^+$ onto some domain $\Omega\subset \bf C$.
The class of domains which are obtained in this way can be characterised by a geometric property, which is the upper half-plane version of a classical result on univalent functions in the unit disk, concerning starlike domains. 
\subsection{Starlike domains}\label{starlike}
\begin{definition}
A domain $\Omega\subset \bf C$ is called {\sl starlike at} $-\infty$ if $\Omega\ne\emptyset,\bf C$ and,  for any $t>0$, one has $\Omega-t\subset \Omega$. 
\end{definition}
A domain $\Omega$, which is starlike at $-\infty$, is a union of open horizontal half-lines
$$D_\Omega(q)=\Omega\cap\{p+iq\,|\, p\in{\bf R}\} =\{p+iq\,|\, p< d_\Omega(q)\}$$ where $d_\Omega:{\bf R}\to[-\infty,+\infty]$ is a lower semicontinuous function and 
$d^{-1}_\Omega(-\infty)=]-\infty,q_-]\cup [q_+,+\infty[$ whith $-\infty\leq q_-<q_+\leq +\infty$.
\begin{proposition}
Let $\psi$ be a nonzero Nevanlinna function and $\Psi$ be a primitive of $-\psi$ then the domain $\Psi({\bf C}^+)$ is starlike at $-\infty$. Conversely, for any
 domain $\Omega$, starlike at $-\infty$, there exists $\psi$, a nonzero Nevanlinna function and $\Psi$ be a primitive of $-\psi$, such that
$\Omega=\Psi( {\bf C}^+)$.
\end{proposition}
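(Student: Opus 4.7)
My plan is to route both directions through the one-parameter semigroup $T_t(w)=w-t$ of real left-translations. For the direct implication, the constant case is immediate: if $\psi\equiv c\in{\bf R}\setminus\{0\}$ then $\Psi$ is affine and $\Psi({\bf C}^+)$ is a half-plane, visibly preserved by real left-translation. In the non-constant case the maximum principle (as in Lemma \ref{univalent}) gives $\Im\psi<0$ strictly on ${\bf C}^+$, so $V(z):=1/\psi(z)$ is holomorphic on ${\bf C}^+$ with $\Im V>0$. Given $z_0\in{\bf C}^+$ and $w_0=\Psi(z_0)$, I would solve the Cauchy problem $z'(s)=V(z(s))$, $z(0)=z_0$. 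Differentiating $\Psi\circ z$ yields $(\Psi\circ z)'(s)=-\psi(z(s))V(z(s))=-1$, so $\Psi(z(s))=w_0-s$ as long as the solution exists, which directly shows $w_0-s\in\Psi({\bf C}^+)$. Since $\Im z'(s)=\Im V(z(s))>0$, the imaginary part of $z(s)$ is strictly increasing, so the trajectory stays in ${\bf C}^+$ and cannot escape through the real axis; a continuation argument, based on the fact that $|\Psi(z)|\to\infty$ as $|z|\to\infty$ in ${\bf C}^+$ with $\Im z$ bounded below (read off from the Nevanlinna representation (\ref{nevanlinna})), rules out finite-time blow-up.

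For the converse, if $\Omega$ is starlike at $-\infty$, the homotopy $h_s(w)=w-s$, $s\geq 0$, retracts it to $-\infty$, so $\Omega$ is simply connected and the Riemann mapping theorem furnishes a conformal $\Psi:{\bf C}^+\to\Omega$. I would fix the M\"obius normalization so that the boundary point $\infty\in\partial{\bf C}^+$ corresponds to the end of $\Omega$ at real $-\infty$. Starlikeness then makes the translations $T_t:\Omega\to\Omega$ a one-parameter semigroup of holomorphic self-maps; pulling back along $\Psi$ produces the semigroup $\tilde T_t:=\Psi^{-1}\circ T_t\circ\Psi$ of holomorphic self-maps of ${\bf C}^+$ with Denjoy-Wolff point at $\infty$, whose infinitesimal generator is
\begin{equation*}
G(z)\;:=\;\left.\frac{d}{dt}\right|_{t=0}\tilde T_t(z)\;=\;-1/\Psi'(z)\;=\;1/\psi(z),
\end{equation*}
for $\psi:=-\Psi'$. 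A Berkson-Porta type characterization of the generators of semigroups of holomorphic self-maps of ${\bf C}^+$ with Denjoy-Wolff point at infinity forces $\Im G\geq 0$ on ${\bf C}^+$, equivalently $\Im\psi\leq 0$, so $\psi$ is a Nevanlinna function, nonzero because $\Psi'\neq 0$.

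The principal difficulty, in both halves, is converting local analytic data into a global statement. In the direct direction this is the global existence of the ODE $z'=1/\psi(z)$, which has to be wrung from the growth behaviour of $\Psi$ at infinity via the explicit formula (\ref{nevanlinna}). In the converse, the technical heart is the invocation of a half-plane Berkson-Porta theorem, together with the care needed to place the Denjoy-Wolff point at $\infty$ through the Riemann normalization. Once both ingredients are in place the two implications close up cleanly.
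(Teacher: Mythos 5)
Your route is genuinely different from the paper's. The paper handles this proposition by adapting the classical disk argument for starlike univalent functions (Pommerenke, Ch.~2.2): since $\partial_x\Im\Psi(x+i\varepsilon)=-\Im\psi(x+i\varepsilon)>0$, each boundary curve $x\mapsto\Psi(x+i\varepsilon)$ has strictly increasing imaginary part, so $\Psi(\{\Im z>\varepsilon\})$ is the region to the left of a monotone curve, hence starlike at $-\infty$, and one lets $\varepsilon\to0$. Your replacement of this by the translation semigroup $T_t(w)=w-t$ and its conjugated flow is a legitimate alternative, and in the converse direction the appeal to a half-plane Berkson--Porta theorem (generators of semigroups of holomorphic self-maps of ${\bf C}^+$ with Denjoy--Wolff point at $\infty$ are exactly the holomorphic maps ${\bf C}^+\to\overline{{\bf C}^+}$) is sound, provided you justify that the Denjoy--Wolff point really is $\infty$: this requires knowing that all leftward rays of a starlike-at-$-\infty$ domain determine a single prime end (connect $w_1-T$ to $w_2-T$ by $\gamma-T$, $\gamma$ a path from $w_1$ to $w_2$ in $\Omega$), which your phrase ``the end of $\Omega$ at real $-\infty$'' glosses over.

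The genuine gap is the completeness of the flow in the direct implication. You correctly reduce the failure mode to $|z(s)|\to\infty$ in finite time (since $\Im z(s)$ increases), but you exclude it by asserting that $|\Psi(z)|\to\infty$ as $|z|\to\infty$ with $\Im z$ bounded below, ``read off'' from (\ref{nevanlinna}). It is not: integrating the representation along a horizontal line only shows that $\Im\Psi(x+i\delta)$ is increasing in $x$, and it may stay bounded as $x\to+\infty$ (already for $\psi(z)=1/z$, where $\Psi=-\log z$ and $\Im\Psi(x+i\delta)\to0$), so one must also control $\Re\Psi$; meanwhile the pointwise lower bound that (\ref{nevanlinna}) does give, $|\psi(z)|\gtrsim \Im z/(1+|z|^2)$, only yields $|z'(s)|\lesssim |z(s)|^2$, which does not rule out finite-time blow-up. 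Worse, the assertion you need (that $\Psi^{-1}$ stays bounded on bounded leftward segments) is essentially equivalent to the starlikeness you are proving, so the argument as written is circular at this point. The clean repair is to use in the forward direction the very theorem you invoke in the converse: $\Im(1/\psi)=-\Im\psi/|\psi|^2\geq0$, so $1/\psi$ maps ${\bf C}^+$ into $\overline{{\bf C}^+}$ and is therefore the infinitesimal generator of a globally defined semigroup of holomorphic self-maps of ${\bf C}^+$; global forward existence is part of that statement, and then $\Psi(z(s))=w_0-s$ gives starlikeness exactly as you say. (Alternatively, fall back on the boundary-curve argument, which is what the paper's reference to Pommerenke amounts to.) The rest of your write-up, including the univalence input from Lemma~\ref{univalent} and the simple connectivity of $\Omega$ in the converse, is fine.
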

The proof is similar to the case of univalent functions on the unit disk, cf Pommerenke Ch. 2.2 \cite{P}.

It is instructive to  consider the case of rational Nevanlinna functions. Let $\psi(z)$ be such a function, with  partial fraction expansion
$$\psi(z)=az+b+\sum_{k=1}^N\frac{\alpha_k}{z-\xi_k}$$
where $a< 0$, $b$ is real,  the $\xi_k$ are real (with $\xi_1<\xi_2<\ldots<\xi_N$) and the $\alpha_k$ are positive.
We have
$$\Psi(z)=-\frac{1}{2}az^2-bz-\sum_k\alpha_k\log(z-\xi_k)$$ where we take the determination of the logarithm on $\bf C\setminus \bf R_-$ such that
 $\log(t)>0$ for $t>0$. The map $\Psi$ extends continuously (even analytically) to the boundary of $\bf C^+$ (i.e. to $\bf R$) except at the points $\xi_k$, moreover its imaginary part is constant on each interval $]\xi_k,\xi_{k+1}[$, while its real part is a stricly convex function on each of these intervals, whith limit $+\infty$ at each boundary point. It follows that
the image  of  ${\bf C}^+$ by $\Psi$ is the complement of a sequence of horizontal half-lines 
${\mathcal D}_1,{\mathcal D}_2,\ldots,
{\mathcal D}_{N+1}$, each of the form ${\mathcal D}_j=\{p+iq_j\,|\,p\geq p_j\}$ 
 hence
\begin{equation}\label{starshaped}
\Psi({\bf C}^+)=\Omega={\bf C}\setminus ({\mathcal D}_1\cup {\mathcal D}_2\cup\ldots\cup
{\mathcal D}_{N+1}).
\end{equation}
Conversely, it is not difficult to check that for any finite family of horizontal half-lines, as above, the conformal map from 
${\bf C}^+$ to ${\bf C}\setminus ({\mathcal D}_1\cup {\mathcal D}_2\cup\ldots\cup
{\mathcal D}_{N+1})$, mapping $\infty$ to $\infty$, is the primitive of the opposite of a rational Nevalinna function. 

As an example, the conformal mapping $\Psi(z)=z^2/2-\log(z)$, corresponding to $\psi(z)=-z+\frac{1}{z}$, maps  $\bf C^+$ to $\bf C\setminus ({\mathcal D}_1\cup{\mathcal D_2})$ where 
 ${\mathcal D_1}=\{p\,|\,p\geq 1/2\}$ and ${\mathcal D_2}=\{p-i\pi\,|\,p\geq 1/2\}$ are  two horizontal half lines. The figure below shows some of 
the flow lines and equipotential lines, i.e. the images by $\Psi$ of  the lines  $\Im(z),\Re(z)=cst$ in  ${\bf C}^+$.
\begin{figure}[h!]
  
  \includegraphics[width=16cm]{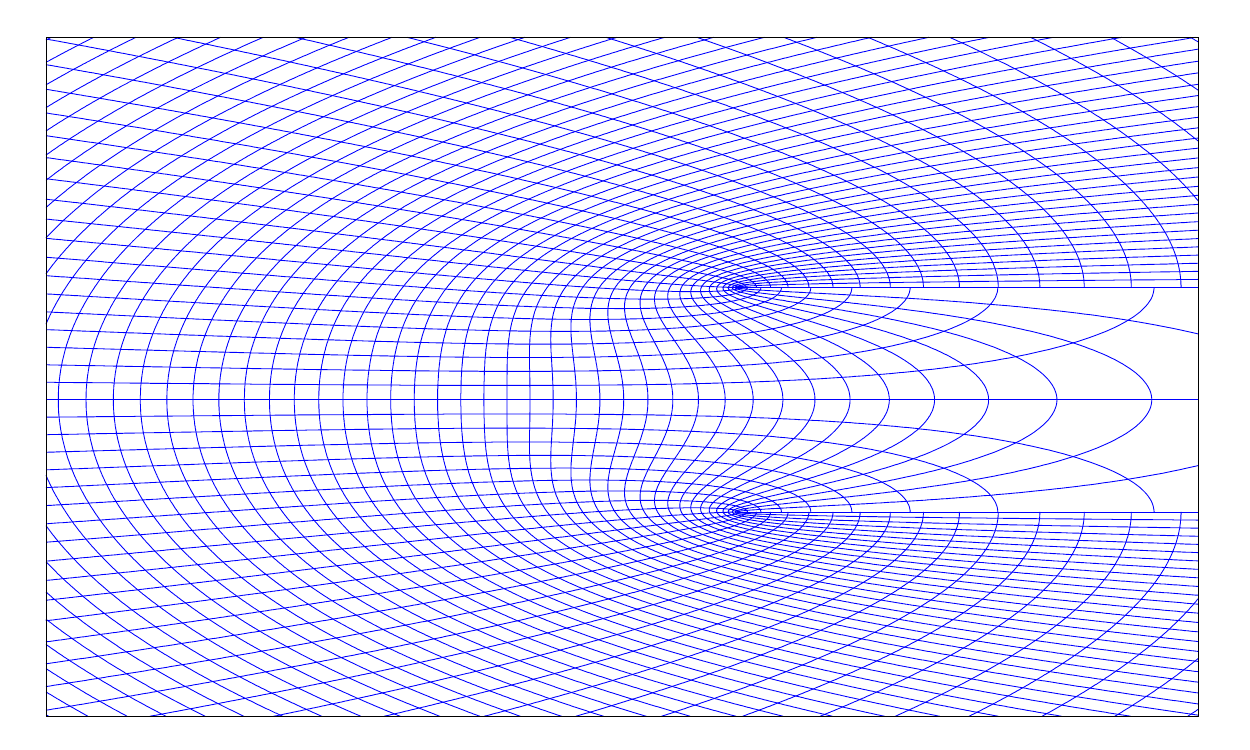}
\caption{ The image of $\Psi(z)=z^2/2-\log(z)$}
\end{figure}

\subsection{Starlike domains containing an upper halfplane}\label{starup}
In this section, $\psi$ denotes a Nevanlinna function with canonical representation
$$
\psi(z)=\alpha z+\beta +\int_{-\infty}^{+\infty}\frac{1+uz}{z-u}\nu(du)
$$
and $\Psi$ a primitive of $-\psi$. We look for conditions on $\psi$ ensuring that the starlike domain  $\Psi(\bf C^+)$ contains a translate of 
$\bf C^+$.
\begin{lemma}\label{C+} If $\Psi(\bf C^+)$ contains a translate of 
$\bf C^+$ then one has $\int_0^\infty u^2\nu(du)<+\infty$.
\end{lemma}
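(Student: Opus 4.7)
\smallskip
\noindent\textit{Proof plan.} The strategy is to pass $\varepsilon\to 0^+$ in the identity $\Im\Psi(x+i\varepsilon)-\Im\Psi(y+i\varepsilon)=-\int_y^x\Im\psi(t+i\varepsilon)\,dt$ (which comes from $\Psi'=-\psi$) so as to obtain a precise formula for the increment of $\Im\Psi$ on the real axis, and then to confront the resulting monotone-non-decreasing boundary function with the upper bound $\Im c$ forced by $\partial\Omega\subset\{\Im w\leq\Im c\}$ when $\Omega=\Psi(\bf C^+)\supset \bf C^+ + c$.

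Concretely, I would first establish, for $y<x$ outside the (countable) set of atoms of $\nu$, the identity
\begin{equation}\label{eq:jumpplan}
\Im\Psi(x+i0^+)-\Im\Psi(y+i0^+) = \pi\int_y^x(1+u^2)\,\nu(du).
\end{equation}
Using the Nevanlinna representation of $\psi$, $-\Im\psi(t+i\varepsilon)=-\alpha\varepsilon+\varepsilon\int_{\bf R}(1+u^2)[(t-u)^2+\varepsilon^2]^{-1}\,\nu(du)$, so Fubini gives
$$\Im\Psi(x+i\varepsilon)-\Im\Psi(y+i\varepsilon) = -\alpha\varepsilon(x-y)+\int_{\bf R}(1+u^2)\bigl[\arctan\tfrac{x-u}{\varepsilon}-\arctan\tfrac{y-u}{\varepsilon}\bigr]\,\nu(du).$$
To let $\varepsilon\to 0^+$, split the $u$-integral at $\{y,x\}$: on $(y,x)$ the integrand is dominated by the constant $\pi(1+\max(y^2,x^2))$ on a bounded set and dominated convergence yields the limit $\pi\int_{(y,x)}(1+u^2)\,\nu(du)$; on $\bf R\setminus[y,x]$ the $\arctan$-subtraction formula reduces the integrand to $(x-y)\varepsilon(1+u^2)/[\varepsilon^2+(x-u)(y-u)]$, whose $\nu$-integral is $O(\varepsilon|\log\varepsilon|)$ when $y,x$ are not atoms. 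Fatou's theorem ensures that the nontangential limit $\Psi(\cdot+i0^+)$ exists for almost every $x$.

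Next, if $\Psi(x+i0^+)$ exists then it lies in $\partial\Omega$: otherwise continuity of $\Psi^{-1}\colon\Omega\to\bf C^+$ would force $x=\Psi^{-1}(\Psi(x+i0^+))\in\bf C^+$, a contradiction. The hypothesis $\Omega\supset\bf C^+ + c$ now forces $\Im\Psi(x+i0^+)\leq\Im c$ for a.e. $x$. Fixing one such $y$ and sending $x\to+\infty$ through regular points, \eqref{eq:jumpplan} gives $\pi\int_y^x(1+u^2)\,\nu(du)\leq\Im c-\Im\Psi(y+i0^+)$, so monotone convergence yields $\int_y^\infty(1+u^2)\,\nu(du)<\infty$ and therefore $\int_0^\infty u^2\,\nu(du)<\infty$.

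The main obstacle is the rigorous passage to the boundary in \eqref{eq:jumpplan}, which requires the two-zone dominated-convergence argument sketched above together with the a.e. existence of boundary values of $\Psi$; the rest is geometric, and the weight $1+u^2$ in \eqref{eq:jumpplan} is precisely what converts ``$\Im\Psi$ bounded above'' into ``$\int u^2\,\nu<\infty$''.
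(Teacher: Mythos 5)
Your proposal is correct and follows essentially the same route as the paper: both integrate $\Im\psi(\cdot+i\varepsilon)$ over a horizontal segment using the Nevanlinna representation, let $\varepsilon\to 0$ to produce $\pi\int_y^x(1+u^2)\,\nu(du)$ (the paper omits the harmless factor $\pi$), and then use the containment of a translated half-plane to bound this quantity uniformly in $x$. Your two-zone dominated-convergence argument and the boundary-value discussion merely supply the details behind the paper's ``one can easily see'' step and its choice of a base point $w$ with $\Im\Psi(w+i\varepsilon)$ bounded, so no further comparison is needed.
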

\begin{proof}
The region ${\bf C}^+_\varepsilon=\{z\,|\,\Im(z)>\varepsilon\}$ is mapped by  $\Psi$ to the region on the left of the curve $x\mapsto \Psi(x+i\varepsilon)$ where the function $\Psi(x+i\varepsilon)$   has strictly increasing imaginary part. The image of $\Psi$ is a proper domain in $\bf C$ therefore  there exists a point $w\in\bf R$ such that $\Im(\Psi(w+i\varepsilon))$ remains bounded as $\varepsilon\to 0$. 
One has
$$
\begin{aligned}\label{ImPsi}
\Im(\Psi(A+i\varepsilon)-\Psi(w+i\varepsilon))&=\int_w^A\Im(\psi(x+i\varepsilon))dx
\\
&=-\varepsilon \alpha (A-w)+\int_w^A\left(\int_{-\infty}^{\infty}\frac{\varepsilon(1+u^2)}{(x-u)^2+\varepsilon^2}\nu(du)\right)dx
\end{aligned}
$$
One can easily see that $$\int_w^A\left(\int_{-\infty}^{\infty}
\frac{\varepsilon(1+u^2)}{(x-u)^2+\varepsilon^2}\nu(du)\right)dx\to_{\varepsilon\to 0}
 \int_w^A(1+u^2)\nu(du).$$   
If $\Psi(\bf C^+)$ contains a translate of 
$\bf C^+$ then this quantity must remain bounded as $A\to\infty$, therefore $\int_0^\infty u^2\,\nu(du)<+\infty$.\end{proof}
From the description of starlike domains in section \ref{starlike} we see that, if $\int_0^\infty u^2\nu(du)<+\infty$ then there exists  some real number $q_0$ such that
either $d_\Omega(q)=+\infty$ for $q\geq q_0$ or $d_\Omega(q)=-\infty$ for $q\geq q_0$.
The domain $\Psi(\bf C^+)$ contains a translate of 
$\bf C^+$ if and only if 
the first case holds. It follows that, if $\Psi(\bf C^+)$ contains complex numbers of arbitrarily large imaginary part then
$\Psi(\bf C^+)$  contains a translate of $\bf C^+$.
Note that, by Cauchy-Schwarz inequality one has $\int_0^\infty u\,\nu(du)<+\infty$ and the quantity $\int_{\bf R} u\,\nu(du)$ is well defined in $[-\infty,+\infty[$.
 \begin{lemma}\label{NevC+}
Assume that $\int_0^\infty u^2\,\nu(du)<+\infty$.
\begin{enumerate}

\item
If 
$\alpha<0$ then $\Psi(\bf C^+)$ contains a translate of 
$\bf C^+$.
\item
If 
$\alpha=0$ and $\beta+\int_{\bf R} u\,\nu(du)<0$ then $\Psi(\bf C^+)$ contains a translate of 
$\bf C^+$.
\item
If 
$\alpha=0$ and $\beta+\int_{\bf R} u\,\nu(du)\geq 0$ then $\Psi(\bf C^+)$ does not contain a translate of 
$\bf C^+$.
\end{enumerate}
\end{lemma}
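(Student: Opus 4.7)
The key translation comes from the discussion preceding the lemma: under the standing hypothesis $\int_0^\infty u^2\,\nu(du)<\infty$, the starlike-at-$-\infty$ domain $\Psi({\bf C}^+)$ contains a translate of ${\bf C}^+$ if and only if $\Im\Psi$ is unbounded above on ${\bf C}^+$. So cases (1) and (2) reduce to exhibiting a path along which $\Im\Psi\to+\infty$, and case (3) reduces to a universal upper bound.

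\textbf{Cases (1) and (2).} In case~(1) I would reuse the horizontal-line identity from the proof of Lemma~\ref{C+},
$$\Im\Psi(A+i\varepsilon)-\Im\Psi(w+i\varepsilon)=-\alpha\varepsilon(A-w)+\int_w^A\!\int_{\bf R}\frac{\varepsilon(1+u^2)}{(x-u)^2+\varepsilon^2}\,\nu(du)\,dx.$$
Since $\alpha<0$ the first term equals $|\alpha|\varepsilon(A-w)\to+\infty$ and the second is $\geq 0$. For case~(2) I would instead go along the imaginary axis: using $\Psi'=-\psi$,
$$\Im\Psi(iv)-\Im\Psi(i)=-\int_1^v\Re\psi(it)\,dt,\qquad \Re\psi(it)=\beta+\int_{\bf R}\frac{u(t^2-1)}{u^2+t^2}\,\nu(du).$$
The inner fraction $\frac{t^2-1}{u^2+t^2}$ is increasing in $t$ for each $u$, so $\frac{u(t^2-1)}{u^2+t^2}$ is monotone in $t$ (increasing for $u>0$, decreasing for $u<0$) with pointwise limit $u$; monotone convergence applied separately on $\{u>0\}$ and $\{u<0\}$ gives $\Re\psi(it)\to\beta+\int u\,\nu(du)=\gamma\in[-\infty,0)$. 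Thus $\Re\psi(it)$ is eventually bounded above by a strictly negative constant and $\Im\Psi(iv)\to+\infty$, which handles both the finite and the $\gamma=-\infty$ sub-cases uniformly.

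\textbf{Case (3).} Now $\gamma\geq 0$ is finite, which together with $\int_0^\infty u\,\nu(du)<\infty$ forces $\int_{\bf R}|u|\,\nu(du)<\infty$. Rewrite $\psi(z)=\gamma+\int_{\bf R}\frac{1+u^2}{z-u}\nu(du)$ and set
$$g(z):=\int_{\bf R}(1+u^2)\arg(z-u)\,\nu(du),$$
using the branch with $\arg(z-u)\in(0,\pi)$; the integral is finite because $\arg(z-u)\sim y/|u|$ as $u\to-\infty$ (integrand $\sim y|u|$, controlled by $\int|u|\nu<\infty$) and $\arg(z-u)\to\pi$ as $u\to+\infty$ (integrand $\sim\pi u^2$, controlled by the standing hypothesis). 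Applying the Cauchy--Riemann equations to $\Psi=U+iV$ with $\Psi'=-\psi$, a direct check gives $V_x=-\Im\psi=-g_x$ and $V_y=-\Re\psi=-\gamma-g_y$, so $V+\gamma y+g$ has vanishing gradient on ${\bf C}^+$ and is therefore a constant $C$. Since $\gamma y\geq 0$ and $g\geq 0$, $\Im\Psi\leq C$ on all of ${\bf C}^+$, so $\Psi({\bf C}^+)$ contains no translate of ${\bf C}^+$.

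\textbf{Main obstacle.} The most delicate step is the monotone convergence in case~(2) when $\gamma=-\infty$, where the neat decomposition $\psi=\gamma+\int(1+u^2)/(z-u)\,\nu(du)$ is unavailable and one must work with the original Nevanlinna integrand, carefully separating the signs of $u$. In case~(3) the alternative of writing $\Psi(z)=-\gamma z-\int(1+u^2)\log(z-u)\,\nu(du)+\text{const}$ is tempting but risky, since the real part $\log|z-u|$ may fail to be $\nu$-integrable even when the imaginary part is; this is why I work directly with the harmonic conjugate $g$ of a harmonic function rather than splitting a complex logarithm inside the integral.
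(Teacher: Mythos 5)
Your proof is correct, and for the first two cases it runs essentially parallel to the paper's: the reduction to unboundedness of $\Im\Psi$ is exactly the dichotomy stated in the discussion preceding the lemma; for (1) the paper instead reads off $\Psi(ye^{i\pi/4})\sim -\tfrac{\alpha}{2}iy^2$ along a diagonal ray, while you push along a horizontal line at height $\varepsilon$ --- both work; for (2) the paper merely asserts that $\Re\psi(iy)<-\varepsilon$ for $y$ large, and your monotone-convergence computation (splitting at $u=0$, with the $\gamma=-\infty$ sub-case absorbed) is a complete justification of that assertion. Case (3) is where you genuinely diverge. The paper integrates $-\Re\psi$ vertically from height $1$ to height $y$, discards the favourable contribution of $u<x$, bounds the remainder by $\tfrac{\pi}{2}\int_0^{\infty}(1+u^2)\nu(du)$ via the arctangent inequality, and then needs the further observation that $\Im\Psi(x+i)$ is bounded in $x$ to conclude on all of ${\bf C}^+$. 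Your identity $\Im\Psi+\gamma y+g\equiv C$, with the nonnegative function $g(z)=\int_{\bf R}(1+u^2)\arg(z-u)\,\nu(du)$, yields $\Im\Psi\leq C$ on all of ${\bf C}^+$ in one stroke; moreover your integrability check for $g$ uses exactly the available hypotheses ($\int_{-\infty}^0|u|\,\nu(du)<\infty$, forced by finiteness of $\gamma$, on the left, and $\int_0^{\infty}u^2\,\nu(du)<\infty$ on the right) and is insensitive to the possible divergence of $\int_{-\infty}^0u^2\,\nu(du)$ --- a point that the paper's passage from $\int_x^{\infty}(1+u^2)\nu(du)$ to $\int_0^{\infty}(1+u^2)\nu(du)$ glosses over when $x<0$. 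The only detail worth adding to your write-up is the one-line justification for differentiating $g$ under the integral sign: $(1+u^2)/|z-u|^2$ is bounded uniformly for $z$ in compact subsets of ${\bf C}^+$ and $\nu$ is finite.
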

\begin{proof}\

 \begin{enumerate}

\item One has $\Psi(ye^{i\pi/4})\sim \frac{-\alpha}{2}iy^2$ for large $y$ therefore 
$\Psi(\bf C^+)$ contains complex numbers of arbitrarily large imaginary part and one concludes from the discussion before Lemma \ref{NevC+}.
\item
If $\alpha=0$ and $\int_{\bf R}u\,\nu(du)<0$ then 
$\Re(\psi(iy))<-\varepsilon$ for $y$ large enough and some $\varepsilon>0$. It follows that $\Im(\Psi(iy))\to\infty$ as $y\to\infty$ and we conclude by the same argument. \item
If $\alpha=0$ and $\int_{\bf R}u\,\nu(du)=\gamma\geq 0$ then 
$$\psi(z)=\gamma z+\int_{\bf R}\frac{1+u^2}{z-u}\nu(du)$$
and one has
$$
\begin{aligned}
\Im(\Psi(x+iy)-\Psi(x+i))&=-\gamma(y-1)-\int_1^y\left[\int_{\bf R}\frac{(1+u^2)(x-u)}{(x-u)^2+w^2}\nu(du)\right]dw
\\ &\leq 
-\gamma(y-1)+\int_1^y\left[\int_x^{\infty}\frac{(1+u^2)(u-x)}{(x-u)^2+w^2}\nu(du)\right]dw
\end{aligned}$$
One has, for $y\geq 0$, 
\begin{equation}\label{arctan}
\int_1^y \frac{v}{v^2+w^2}dw\leq\frac{\pi}{2}
\end{equation}
 therefore
\begin{equation}\label{imag}
\Im(\Psi(x+iy)-\Psi(x+i))\leq \gamma(1-y)+\frac{\pi}{2}\int_0^\infty (1+u^2)\nu(du)
\end{equation}
Since  $\Im(\Psi(x+i))$ is  uniformly bounded in $x$ it follows that  $\Im(\Psi(z))$ is uniformly bounded on ${\bf C}^+$.
\end{enumerate}
\end{proof}
Using Lemmas \ref{C+} and \ref{NevC+} we can now state  necessary and sufficient conditions on $\psi$ so that  $\Psi(\bf C^+)$  contains a translate of 
$\bf C^+$.

\begin{proposition}
Let  $\psi$ be a Nevanlinna function with
canonical representation 
$$
\psi(z)=\alpha z+\beta +\int_{-\infty}^{+\infty}\frac{1+uz}{z-u}\nu(du)
$$
and $\Psi$ be a primitive of $-\psi$, then $\Psi({\bf C}^+)$ contains a translate of ${\bf C}^+$ if and only if one of the following exclusive conditions is fulfilled:
\begin{enumerate}
\item
$\int_0^\infty u^2\nu(du)<+\infty$ and  $\alpha<0$.
\item
$\int_0^\infty u^2\nu(du)<+\infty,\, \alpha=0$ and $\beta+\int_{\bf R} u\,\nu(du)<0$.
\end{enumerate}
\end{proposition}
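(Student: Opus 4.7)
The plan is to assemble the proposition directly from the three preceding lemmas, since together they already cover every case.

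First I would dispose of the sufficiency direction. If condition (1) holds, then Lemma \ref{NevC+}(1) gives a translate of ${\bf C}^+$ inside $\Psi({\bf C}^+)$ at once; if condition (2) holds, Lemma \ref{NevC+}(2) does the same. So in both cases $\Psi({\bf C}^+)$ contains a translate of ${\bf C}^+$, and it is obvious that (1) and (2) are mutually exclusive because one stipulates $\alpha<0$ and the other $\alpha=0$.

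For necessity, suppose $\Psi({\bf C}^+)$ contains a translate of ${\bf C}^+$. By Lemma \ref{C+} we immediately obtain $\int_0^\infty u^2\,\nu(du)<+\infty$, so in particular $\int_{\bf R} u\,\nu(du)$ is well defined (possibly $-\infty$, but finite on the positive side, with the discussion in section \ref{starup} showing it lies in $[-\infty,+\infty[$). Recall from section \ref{sec_nevanlinna} that in the canonical Nevanlinna representation the coefficient $\alpha$ satisfies $\alpha\leq 0$. Thus either $\alpha<0$, giving condition (1), or $\alpha=0$. In the latter case, Lemma \ref{NevC+}(3) tells us that $\beta+\int_{\bf R} u\,\nu(du)\geq 0$ would force $\Im(\Psi)$ to be bounded on ${\bf C}^+$, contradicting the existence of a translate of ${\bf C}^+$ in $\Psi({\bf C}^+)$; hence we must have $\beta+\int_{\bf R} u\,\nu(du)<0$, which is condition (2).

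Since all the analytic work is already performed in Lemmas \ref{C+} and \ref{NevC+}, no substantive obstacle remains: the proof is a short synthesis. The only small point worth writing carefully is that $\alpha\leq 0$ (a feature of the Nevanlinna representation recalled in section \ref{sec_nevanlinna}) is what collapses the trichotomy $\alpha<0$, $\alpha=0$, $\alpha>0$ into the two cases listed in the statement.
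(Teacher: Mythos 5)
Your proof is correct and is essentially the paper's own argument: the paper states the proposition without a separate proof, introducing it with "Using Lemmas \ref{C+} and \ref{NevC+} we can now state necessary and sufficient conditions," i.e.\ exactly the synthesis you carry out. Your additional remark that $\alpha\leq 0$ in the Nevanlinna representation is what reduces the necessity direction to the two listed cases is a correct and worthwhile clarification.
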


Here are two examples illustrating the different situations.

Figure 2 shows $\psi(z)=-z^{1/2}$ where $\alpha=0$ and  $\nu(du)=\frac{\sqrt{-u}du}{2\pi(1+u^2)}1_{u<0}$, with  
$ \beta+\int u\,\nu(du)=-\infty$. One has 
$\Psi(z)=\frac{2}{3}z^{3/2}$ and   the image $\Psi(\bf C^+)$ is a $3/4$ plane, which contains the upper half-plane. 

\begin{figure}[h!]
  
  \includegraphics[width=12cm]{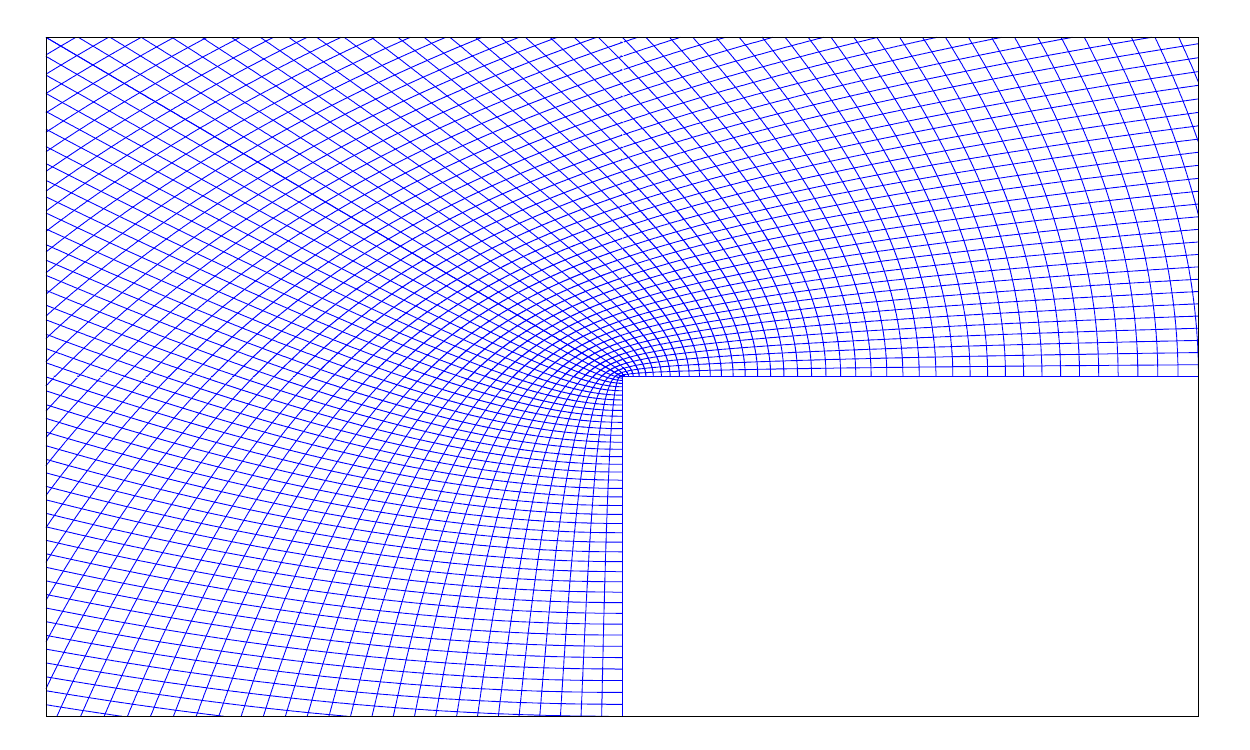}
\caption{ The image of $\Psi(z)=\frac{2}{3}z^{3/2}$}
\end{figure}

In Figure 3 one has 
$\psi(z)=z^{-1/2}$ where $\alpha=0$ and $\nu(du)=\frac{du}{2\pi\sqrt{-u}(1+u^2)}1_{u<0}$ while  $ \beta+\int u\,\nu(du)=0$ and 
$\Psi(z)=-2z^{1/2}$. This time the   image $\Psi(\bf C^+)$ is a $1/4$ plane, it  does not contain a translate of the upper half-plane. 
\begin{figure}[h!]
  
  \includegraphics[width=12cm]{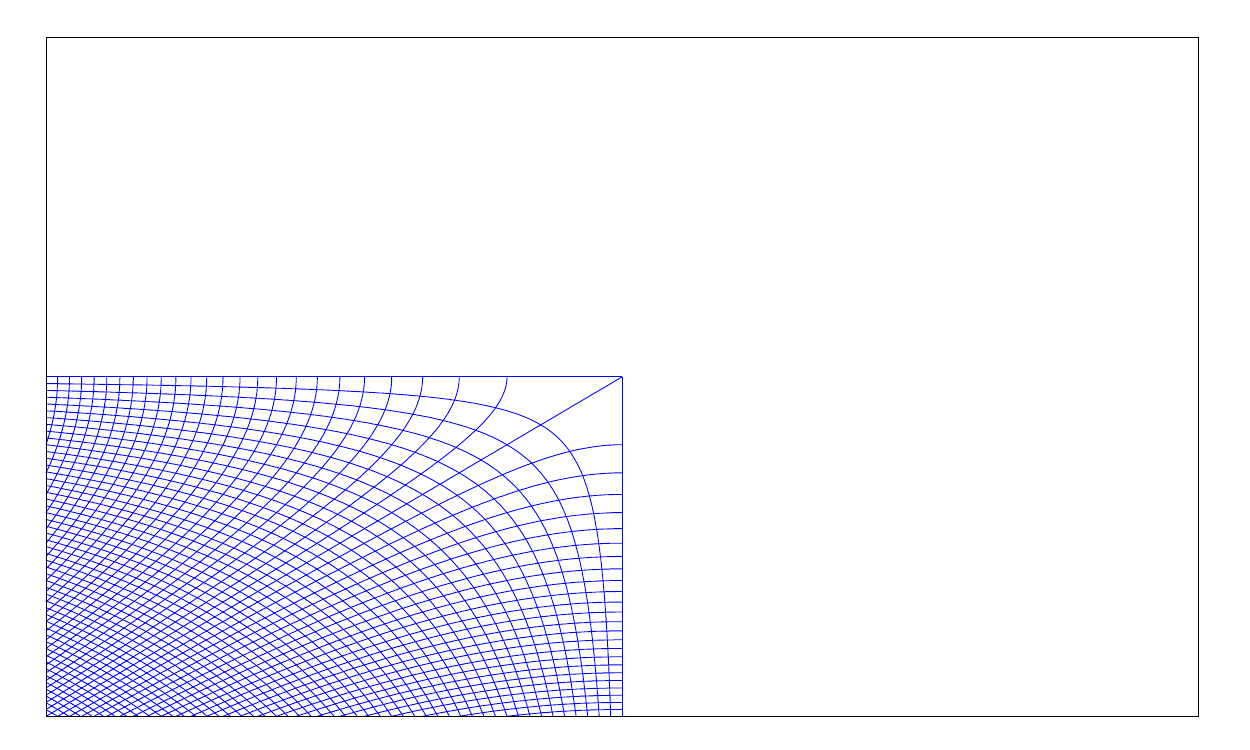}
\caption{ The image of $\Psi(z)=-2z^{1/2}$}
\end{figure}

\section  {Free L\'evy processes with homogeneous transition probabilities}
\subsection{Some preliminary computations}
Let us recall that, by Theorem \ref{freenllevy} we are trying to characterise Nevanlinna functions $\varphi$
 such
that 
\begin{enumerate}[label=(\roman*)]
\item
$
\frac{\varphi(\zeta)}{\zeta}\to_{\genfrac{}{}{0pt}{3}{\zeta\to\infty}{\zeta\in\Gamma_{\alpha,\beta}}} 0
$
 in every domain of the form
$\Gamma_{\alpha,\beta}$
\item For any $t\geq 0$,
$\varphi\circ F_t^{-1}$  has an analytic continuation to
$\bf C^+$, with values in $\bf C^-$.
\end{enumerate}
 Here $F_t(z)$, for $t\geq 0$ is
the semi-group of analytic maps of ${\bf C}^+$ obtained
by solving 
\begin{equation}\label{diff1}
\frac{\partial F_t}{
\partial t}+\varphi(F_t)=0 
\end{equation}
 with initial condition $F_0(z)=z$,

In order to solve (\ref{diff1}) it is natural to introduce a primitive of $-1/\varphi$, denoted  $\Phi$. Indeed  $\Phi$, being the primitive of a Nevanlinna function, is such that $\Phi({\bf C}^+)+t\subset
\Phi({\bf C}^+)$ for $t\geq 0$ and  one has $F_t(z)=\Psi(\Phi(z)+t)$ where $\Psi$ is the inverse of the conformal mapping $\Phi$. Note that  one has
$\Psi'=-\varphi\circ \Phi$.
\subsection{The free nonlinear L\'evy-Khinchine formula}
\subsubsection{}\label{direct}
We denote by  $\psi$  a Nevanlinna function
and $\Psi$ a primitive of $-\psi$,
 such that $\Psi(\bf C^+)$ contains 
$\bf C^+$, as characterised in section \ref{starup}. Let $\Phi=\Psi^{-1}:\Psi({\bf C}^+)\to {\bf C}^+$, then the restriction of $\Phi$  to ${\bf C}^+$
 maps conformally 
${\bf C}^+$ to a domain inside ${\bf C}^+$. It follows that $$F_t(z):=\Psi(\Phi(z)+t)$$ is well defined for all $z\in {\bf C}^+$ and all $t\in \bf R$. Moreover these functions satisfy the equation
$$\frac{d}{dt}F_t(z)+\varphi(F_t(z))=0,\qquad F_0(z)=z, \quad \text{for $z\in{\bf C}^+$}$$
where   $\varphi=\psi\circ\Phi$ is a Nevanlinna function. One has  $F_t(F_s(z))=F_{t+s}(z)$ for all $s,t\in\bf R$ therefore
$F_{-t}=F_t^{-1}$. The maps $\Psi,\Phi$ conjugate the flow $F_t$ on $\Psi({\bf C}^+)$ with the horizontal translation flow on ${\bf C}^+$.
For every $t$ the function $\varphi\circ F^{-1}_t(z)=\psi(\Phi(z)-t)$ is a Nevanlinna function. 
Since $\varphi$ is a Nevanlinna function there exists $a=\lim_{y\to\infty}\frac{\varphi(iy)}{iy}\in ]-\infty,0]$. Assume that $a<0$, then $\Phi({\bf C}^+)$ is a domain included in a horizontal strip of height $-a\pi$ (this is easy to see if $\varphi$ is rational and follows in the general case by approximation). However it follows from the proof of Lemma \ref{NevC+}, parts (1) and (2), that $\Psi^{-1}({\bf C}^+)=\Phi({\bf C}^+)$ contains complex numbers with arbitrarily large imaginary parts.
We conclude that $a=0$
and the function $\varphi$ satisfies all the conditions in Theorem \ref{freenllevy}.

\subsubsection{}\label{converse}
Conversely, let $\varphi$ be a Nevalinna function satisfying the hypothesis of Theorem \ref{freenllevy}  and let $(F_t)_{t\geq 0}$ be the solution to 
$$\frac{d}{dt}F_t(z)+\varphi(F_t(z))=0,\quad F_0(z)=z.$$
Let $\Phi$ be a primitive of $-1/\varphi$ and $\Omega_0=\Phi({\bf C}^+)$. Since $-1/\varphi$ is a Nevanlinna function the function $\Phi$ is a conformal mapping $\Phi:{\bf C}^+\to \Omega_0$ with inverse
 $\Psi:\Omega_0\to{\bf C}^+$. The domain
 $-\Omega_0$ is starlike therefore  $\Omega_0+t\subset \Omega_0$ for all $t\geq 0$ moreover one has
$F_t(z)=\Psi(\Phi(z)+t)$ for all $z\in {\bf C}^+, t\geq 0$. For each $t>0$ the function $F_t$ is univalent  with inverse $F_t^{-1}$ defined at least in a  domain of the form $\Gamma_{\alpha,\beta}$ and one has 
$$\frac{d}{dt}F_t^{-1}(z)-\varphi(F^{-1}_t(z))=0$$ in this domain, therefore putting $F_{-t}=F^{-1}_t$ the equation
$\frac{d}{dt}F_t(z)+\varphi(F_t(z))=0$ also holds for $t<0$ at least in some domain $\Gamma_{\alpha,\beta}$.
The assumption is that, for all $t>0$, $\varphi\circ F_t^{-1}$ extends analytically to a Nevalinna function. 
For every $t$ one has
$$\frac{d}{dz}F_t(z)=\frac{\varphi(F_t(z))}{\varphi(z)}.$$  For $t<0$  the function 
$z\mapsto\varphi(F_t(z))$ has an analytic continuation to ${\bf C}^+$, therefore the functions  $\frac{d}{dz}F_t$ and $F_t$ also have such an analytic continuation.

If $z\in\Omega_0$ and $t\geq 0$ one has $\Psi(z)=F_{-t}(\Psi(z+t))$.
Let $\Omega=\cup_{t\geq 0}(\Omega_0-t)$, then  for every $z\in \Omega$ there exists $t\geq 0$ such that
$z+t\in\Omega_0$ therefore  $\Psi(z+t)\in{\bf C}^+$ and $F_{-t}(\Psi(z+t))$ is well defined, moreover it does not depend on $t$.
This gives an analytic continuation of $\Psi$ to $\Omega$, such that $\Psi'$ takes values with negative imaginary part. 
The domain $\Omega$ is stable under translation by real numbers. Since $\varphi(iy)/iy\to 0$ as $y\to\infty$ the function $\Phi$ takes values with arbitrarily high imaginary parts, therefore  $\Omega$ is either the whole complex plane, or a translate of the upper half plane. In the first case, since $\Psi$ is univalent, it must be a  polynomial of  degree $1$ and $\varphi$ is a constant with negative imaginary part. In the second case, since $\Phi$ was defined up to an integration constant, we can assume that $\Omega$ is equal to  ${\bf C}^+$ and  $\Psi$ is a univalent map whose derivative is $-\psi$ with $\psi$ a Nevalinna function, moreover  $\Psi({\bf C}^+)$ contains ${\bf C}^+$ so that we are in the situation of section \ref{direct}.

\subsection{}
Finally we can summarise the preceding results and state the free nonlinear L\'evy-Khinchine formula, which characterises the
Nevanlinna functions appearing in Theorem \ref{freenllevy}. 

\begin{theorem}\label{freelk}
Let $\varphi$ be a FAL2 function, then either $\varphi$ is a constant, or there exists a univalent function $\Psi$, with inverse $\Phi$ and  derivative  $\Psi'=-\psi$, where $\psi$ is a Nevanlina function, such that $\Psi({\bf C}^+)$ contains ${\bf C}^+$ and such that $\varphi=\psi\circ \Phi$. Conversely, for any functions $\psi,\Psi,\Phi$ satisfying the above requirements, the Nevanlinna function
$\varphi=\psi\circ\Phi$  is a FAL2 function.
\end{theorem}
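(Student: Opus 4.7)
The plan is to assemble the two constructions carried out in Sections \ref{direct} and \ref{converse} into a single equivalence; since all the ingredients have been prepared in Section 3 and in Theorem \ref{freenllevy}, the argument splits cleanly into the two implications and the theorem becomes essentially a bookkeeping statement.

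For the ``only if'' direction, I would start from a non-constant FAL2 function $\varphi$. By the maximum principle (applied to $\varphi$, which takes values in ${\bf C}^-\cup{\bf R}$ and is not constant), $\varphi$ does not vanish on ${\bf C}^+$, so $-1/\varphi$ is itself a Nevanlinna function. Let $\Phi$ be a primitive of $-1/\varphi$; Lemma \ref{univalent} makes $\Phi$ univalent on ${\bf C}^+$, with starlike-at-$-\infty$ image $\Omega_0$, so its inverse $\Psi:\Omega_0\to{\bf C}^+$ is a first candidate. The heart of the argument is to extend $\Psi$ analytically to the translation-stable union $\Omega:=\bigcup_{t\geq 0}(\Omega_0-t)$ through the identity $\Psi(z)=F_{-t}(\Psi(z+t))$; consistency follows because the FAL2 hypothesis says that each $\varphi\circ F_t^{-1}$ extends to a Nevanlinna function on ${\bf C}^+$, so each $F_{-t}$ admits such an extension. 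The normalisation $\varphi(\zeta)/\zeta\to 0$ forces $\Phi$ to take values of arbitrarily large imaginary part, so $\Omega$ is either all of ${\bf C}$, or, up to a real translation of $\Phi$ that does not affect $\psi:=-\Psi'$, the upper half-plane. The first case would make $\Psi$ affine and $\varphi$ constant, which we excluded; the second yields $\Psi({\bf C}^+)\supset{\bf C}^+$ and $\varphi=\psi\circ\Phi$ by construction.

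For the ``if'' direction, given $(\psi,\Psi,\Phi)$ as in the statement I would set $\varphi:=\psi\circ\Phi$ and $F_t(z):=\Psi(\Phi(z)+t)$ for $z\in{\bf C}^+$ and $t\geq 0$. The starlike-at-$-\infty$ property of $\Psi({\bf C}^+)$, combined with ${\bf C}^+\subset\Psi({\bf C}^+)$, makes $F_t$ a well-defined analytic semigroup of self-maps of ${\bf C}^+$. Direct differentiation using $\Psi'=-\psi$ produces the ODE $\partial_t F_t+\varphi(F_t)=0$, and for $t>0$ one computes $\varphi\circ F_t^{-1}(z)=\psi(\Phi(z)-t)$, which extends to a Nevanlinna function on ${\bf C}^+$ by the translation-stability of the relevant domain. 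Finally, the vanishing $\varphi(\zeta)/\zeta\to 0$ in each cone $\Gamma_{\alpha,\beta}$ is extracted from the classification of Lemma \ref{NevC+}, exactly as in Section \ref{direct}.

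The main obstacle is the domain-extension step in the ``only if'' direction: verifying that the partial definitions of $\Psi$ on the various translates $\Omega_0-t$ patch together coherently into a single univalent analytic map on $\Omega$, and then identifying $\Omega$ correctly as a translate of ${\bf C}^+$ rather than a horizontal strip (which the normalisation on $\varphi$ rules out via the discussion preceding Lemma \ref{NevC+}) or the whole plane. Everything else is a routine consequence of the conformal dictionary of Section 3 and the semigroup formalism of Theorem \ref{freenllevy}.
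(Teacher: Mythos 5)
Your proposal follows the paper's own proof essentially verbatim: the converse direction is the construction of Section \ref{direct}, and the forward direction is the extension argument of Section \ref{converse}, including the key step of patching $\Psi$ onto $\Omega=\bigcup_{t\geq 0}(\Omega_0-t)$ via $\Psi(z)=F_{-t}(\Psi(z+t))$ and identifying $\Omega$ as either ${\bf C}$ or a translated half-plane. The only slip is the claim that $\Omega_0=\Phi({\bf C}^+)$ is starlike at $-\infty$: since $\Phi'=-1/\varphi$ is itself a Nevanlinna function, it is $-\Omega_0$ that is starlike at $-\infty$ (equivalently $\Omega_0+t\subset\Omega_0$ for $t\geq 0$), which is in fact what the rest of your argument uses.
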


As we see from the above theorem, the FAL2 functions can be parametrised by a convex  set, i.e. the functions $\Psi$, however going from
$\Psi$ to $\varphi=-\Psi'\circ\Psi^{-1}$ is a nonlinear map.
\section{The case of free multiplicative convolution}
In this section we consider the case of free multiplicative convolutions of measures, on the unit circle and on the positive half-line, recalling Theorems 3.5, 3.6, 4.6.1 and 4.6.2 of \cite{B} and giving the analogues of the nonlinear free L\'evy-Khinchine formula, Theorem \ref{freelk}. Since this is  very similar to the additive case, we only sketch the arguments.
\subsection{Free multiplicative convolution on the circle}
\subsubsection{} Let
$\mu$ and
$\nu$ be probability measures on the unit circle $\bf T$ and let $U$ and
$V$ be two unitary elements in some non-commutative probability space
$(A,\tau)$, with respective distributions $\mu$ and $\nu$, then the
distribution of $UV$ is called the free multiplicative convolution of
$\mu$ and $\nu$ and is denoted by $\mu\boxtimes \nu$. Define 
$$\eta_{\mu}(z)=\int_{\bf T}{z\xi\over 1-z\xi}d{\mu}(\xi)$$
 Let $\cM_*$ be the set of probability
measures on $\bf T$ such that $\int_{\bf T}\xi d\mu(\xi)\not=0$. If
$\mu\in\cM_*$ then the function ${\eta_{\mu}\over 1+\eta_{\mu}}$ has a right
inverse, called 
$\tilde{\chi}_{\mu}$, defined in a neighbourhood of $0$,  such that
$\tilde \chi_{\mu}(0)=0$, and we let
$\Sigma_{\mu}(z)= \frac{1}{z}\tilde\chi_{\mu}(z)$ be the
$\Sigma$-transform of $\mu$. Then, for any measures
$\mu,\nu\in\cM_*$,  one has $\mu\boxtimes \nu\in\cM_*$ and 
$$\Sigma_{\mu\boxtimes\nu}(z)= \Sigma_{\mu}(z)\Sigma_{\nu}(z)$$
in some neighbourhood of zero where these three functions are defined.
If one of the measures has zero mean then $\mu\boxtimes\nu$ is the uniform measure on $\T$.
\subsubsection{}
The analogue, for free multiplicative convolution on $\T$, of the
L\'evy-Khinchine formula, states that a probability measure on $\T$
is infinitely divisible, for the free multiplicative convolution,  if and
only
if its $\Sigma$ transform can be written as 
$\Sigma_{\mu}(z)=\exp(u(z))$ where $u$ is an analytic function on the open unit disk $\bf D$,
taking values with nonnegative real parts. Such a function has a  
representation of the form 
$$u(z)=i\alpha +\int_{\T}{1+\zeta z\over 1-\zeta z}d\nu (\zeta)$$
for some finite positive measure $\nu$ on $\bf T$, and real number
$\alpha$.
\subsubsection{}
The Markov and subordination property of the free multiplicative convolution is given by the following Theorem 3.5 from \cite{B}.
 \begin{theorem} \label{suboru}
Let
$(A,\tau)$ be a non commutative probability space, 
$B\subset A$ be a von Neumann subalgebra, and $U,V\in A$ such that $U$
and $V$ are unitary, with  respective distributions $\mu$ and
$\nu$, one has  $U\in B$, and $V$ is free with $B$, then 
there exists a Feller Markov kernel 
$\cK=k(\xi,d\omega)$ on
$\T\times \T$ and an analytic function $F$, defined on $\D$, such that
\begin{enumerate}
\item For any bounded Borel function $f$ on $S$, one has
$\tau(f(UV)\vert B)=\cK f(U).$
 \item $\vert F(z)\vert\leq\vert
z\vert$, for $z\in \D$.
\item for all $z\in \D$ one has
$\int_{\T}{z\omega\over 1-z\omega}k(\xi,d\omega)=\frac{F(z)\xi}{
1-F(z)\xi}$ 
\item  for all $z\in \D$ one has 
$\eta_{\mu}(F(z))=\eta_{\mu\boxtimes \nu}(z)$
\end{enumerate}
 If $\mu\in\cM_*$, the
map
$F$ is uniquely determined by the properties $(2)$ and $(4)$.
\end{theorem}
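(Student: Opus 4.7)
The plan is to parallel the proof of Theorem \ref{subor} with the upper half-plane replaced by the unit disk $\D$, Cauchy transforms replaced by $\eta$-transforms, and additive freeness replaced by multiplicative freeness. The core step will be to establish the operator-valued resolvent identity
$$\tau\bigl((1-zUV)^{-1}\,|\,B\bigr)=(1-F(z)U)^{-1}$$
inside $B$, for $z$ in a neighbourhood of $0$. All four numbered properties plus uniqueness will then fall out of this identity, exactly as properties $(1)$--$(6)$ of Theorem \ref{subor} follow from its scalar counterpart.

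First I would construct $F$ locally from the scalar subordination. The relation $\Sigma_{\mu\boxtimes\nu}=\Sigma_\mu\Sigma_\nu$ is equivalent to $\tilde\chi_\mu(w)\,\Sigma_\nu(w)=\tilde\chi_{\mu\boxtimes\nu}(w)$ near $0$, which shows that setting $F(z):=\tilde\chi_\mu(w(z))$ with $w(z):=\eta_{\mu\boxtimes\nu}(z)/(1+\eta_{\mu\boxtimes\nu}(z))$ defines a holomorphic germ at $0$ with $F(0)=0$ and $\eta_\mu\circ F=\eta_{\mu\boxtimes\nu}$, i.e.\ property $(4)$. To promote this scalar identity to the operator identity above, I would expand $(1-zUV)^{-1}=\sum_{n\geq 0}z^n(UV)^n$ and compute the conditional expectations $\tau((UV)^n|B)$ using the non-crossing-partition description of mixed moments of $U\in B$ with $V$ free from $B$, then resum to match the right-hand side. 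This is the step I expect to be the main obstacle: upgrading the scalar equality of $\eta$-transforms to a resolvent equality in $B$ uses multiplicative freeness at the operator level in an essential way and is the true content of the theorem, analogous to Biane's computation for Theorem \ref{subor}.

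Once the operator identity holds, the rest is essentially formal. The identity forces $(1-F(z)U)$ to be invertible for every $z\in\D$, which yields $F:\D\to\D$ by analytic continuation along the disk, and the Schwarz lemma applied to $F$ (using $F(0)=0$) then gives property $(2)$. The Markov kernel is defined for each $\xi\in\T$ by letting $k(\xi,d\omega)$ be the probability measure on $\T$ whose Herglotz transform equals $(1+F(z)\xi)/(1-F(z)\xi)$; this is well-defined since $|F(z)\xi|<1$ makes the real part positive on $\D$, and property $(3)$ follows by a geometric-series expansion. Weak continuity of $\xi\mapsto k(\xi,\cdot)$, hence the Feller property and Borel measurability, comes from the continuous (in fact analytic) dependence of the Herglotz transform on $\xi$. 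Property $(1)$ is the statement that pairing the operator identity with $b\in B$ matches integration against $\cK$, which one verifies on the dense family $f(\omega)=1/(1-z\omega)$ (equivalently on monomials $\omega^n$) and extends to all bounded Borel $f$. Finally, for uniqueness under $(2)$ and $(4)$ when $\mu\in\cM_*$, the condition $\int\xi\,d\mu(\xi)\neq 0$ gives $\eta_\mu'(0)\neq 0$, so $\eta_\mu$ is biholomorphic near $0$; hence $(4)$ determines $F$ near $0$ and analyticity on $\D$ combined with $(2)$ propagates uniqueness to all of $\D$.
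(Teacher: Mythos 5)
You should first be aware that the paper contains no proof of this statement: it is quoted verbatim as Theorem 3.5 of \cite{B} and used as a black box, so there is nothing in the present text to compare your argument against. Judged on its own, your sketch follows the natural strategy (mirroring the additive Theorem \ref{subor}) and correctly identifies the operator-valued resolvent identity $\tau\bigl((1-zUV)^{-1}\,|\,B\bigr)=(1-F(z)U)^{-1}$ as the heart of the matter. But it has genuine gaps. The largest is the one you flag yourself and then wave through: showing that the conditional expectations $\tau((UV)^n|B)$, which a priori are polynomials in $U$ with coefficients involving all mixed moments, resum to a geometric series $\sum_n F(z)^n U^n$ with \emph{scalar} coefficients is precisely the content of the theorem. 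Saying that the non-crossing-partition expansion ``resums to match the right-hand side'' is not an argument; in particular it is not clear from your setup why the answer should be a resolvent of $U$ at all, rather than some more general element of $B$ whose trace happens to match $1+\eta_{\mu\boxtimes\nu}(z)$.

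Two further points would fail as written. First, your construction of $F$ via $F(z)=\tilde\chi_\mu(w(z))$ presupposes $\mu\in\cM_*$ and $\mu\boxtimes\nu\in\cM_*$; the theorem asserts the \emph{existence} of $F$ and of the kernel for arbitrary unitaries, reserving the hypothesis $\mu\in\cM_*$ only for uniqueness. When $\int_\T\xi\,d\mu(\xi)=0$ or $\int_\T\xi\,d\nu(\xi)=0$ (so that $\mu\boxtimes\nu$ is the Haar measure) the relevant $\tilde\chi$-transforms do not exist and your germ of $F$ at $0$ is undefined; this degenerate case needs a separate treatment. Second, the step ``invertibility of $1-F(z)U$ yields $F:\D\to\D$'' is not valid: invertibility in the von Neumann algebra only says $1/F(z)$ avoids the spectrum of $U$, which bounds nothing if $\mathrm{spec}(U)$ is a proper subset of $\T$. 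To get $|F(z)|<1$ (and hence Schwarz and property (2)) you need either the positivity underlying property (3) --- namely that $\Re\bigl((1-F(z)\xi)^{-1}\bigr)>1/2$ for \emph{every} $\xi\in\T$, which is what makes $k(\xi,\cdot)$ a probability measure but which you would then be assuming before proving --- or the identity $F(z)=z/\Sigma_\nu(w(z))$ coming from $\tilde\chi_{\mu\boxtimes\nu}=\tilde\chi_\mu\Sigma_\nu$ together with a lower bound $|\Sigma_\nu|\geq 1$. Either way this piece of the argument has to be reorganised so that it is not circular. The remaining items (construction of the kernel from the Herglotz transform, the Feller property, verification of (1) on the functions $\omega\mapsto(1-z\omega)^{-1}$, and uniqueness from $\eta_\mu'(0)\neq 0$) are fine.
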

\subsubsection{}
Processes with unitary multiplicative free increments are defined analogously to the additive case, and such processes with homogeneous transition probabilities were called FUL2 processes, for which we now recall the analogue of Theorem \ref{freenllevy}.
\begin{theorem}\label{freemulevy}
 Let $\mu_t,t\geq 0$ and 
$(\mu_{s,t})_{s<t\in\bf R_+}$  be families of probability measures on $\T$ satisfying 
\begin{equation}\label{sg}
\mu_s\boxtimes\mu_{s,t}=\mu_t;\quad \mu_{s,t}\boxplus\mu_{t,u}=\mu_{s,u}
\end{equation}
for all $s<t<u$. Let 
$(\cK_{s,t})_{s<t\in\bf R_+}$ be the corresponding Markov transition functions on $\T$.
Assume that the kernels are time homogeneous, then 
 the kernels
$\cL_{t}\equiv\cK_{0,t}$ for $t\geq 0$, 
 form a Feller Markov
semi-group. Let $F_{s,t}$ be the analytic functions  associated to the
kernels $\cK_{s,t}$ by Theorem \ref{suboru}. The maps $F_{t}\equiv F_{0,t}$, where
$F_0$ is the identity function, form a continuous semigroup, under composition, of
analytic transformations of
${\D}$, moreover there exists a  
function $u$ on $\D$, taking values with nonnegative real part,  such
that 
the maps 
$F_t$, for $t\geq 0$, satisfy the differential equation 
\begin{equation}\label{diffu}
\frac{\partial F_t}{
\partial t}+F_tu(F_t)=0,\qquad F_0(z)=z.
\end{equation}
Conversely,  let  $u$ be an   analytic function 
on
$\D$, such that
$\Re(u(z))\geq 0$ for all $z\in \D$,
 and let
$F_t$, for $t\geq 0$, be the solution of the differential equation
$\frac{\partial
F_t}{
\partial t}+F_tu(F_t)=0$, with $F_0(z)=z$, then there exists
 a free multiplicative L\'evy process of the second kind, with initial
distribution
$\mu_0$, with associated semi-group of maps $(F_t)_{t\in\R_+}$, if and
only if, for every
$t>0$ the function
$u\circ F_t^{-1}\circ {\tilde{\chi}}_{\mu_0}^{-1}$ has an analytic
continuation to
$\D$, taking values with nonnegative real
part.

\end{theorem}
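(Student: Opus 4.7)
The plan is to follow closely the scheme used for Theorem \ref{freenllevy}, with $\D$ replacing $\C^+$, the $\Sigma$-transform replacing the Voiculescu transform, the multiplicative subordination of Theorem \ref{suboru} replacing the additive one of Theorem \ref{subor}, and analytic functions on $\D$ with nonnegative real part playing the role of Nevanlinna functions. The extra factor of $F_t$ in the ODE (\ref{diffu}), compared with (\ref{diff}), reflects the fact that the generators of one-parameter semigroups of holomorphic self-maps of $\D$ fixing the origin are exactly the vector fields of the form $-zu(z)$ with $\Re u\geq 0$; informally, we are linearising a multiplicative rather than an additive flow.

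For the direct implication, the Chapman--Kolmogorov relation for $\cK_{s,t}$ translates, via property $(4)$ of Theorem \ref{suboru} and the multiplicativity $\Sigma_{\mu\boxtimes\nu}=\Sigma_\mu\Sigma_\nu$, into $F_{s,u}=F_{s,t}\circ F_{t,u}$, so time homogeneity gives a one-parameter semigroup $(F_t)_{t\geq 0}$ of holomorphic self-maps of $\D$ with $|F_t(z)|\leq |z|$, and in particular $F_t(0)=0$. Continuity of the kernels in $t$, inherited from the continuity of the measures $\mu_t$, yields continuity of the semigroup. The infinitesimal generator $v(z)=-\partial_t F_t(z)|_{t=0}$ is then holomorphic on $\D$, vanishes at $0$, and can be written as $v(z)=zu(z)$; differentiating $|F_t(z)|^2\leq|z|^2$ at $t=0$ gives $\Re(\bar z\, v(z))\geq 0$, hence $\Re u(z)\geq 0$, and (\ref{diffu}) follows from the semigroup identity $F_{t+s}=F_t\circ F_s$.

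For the converse, given $u$ with $\Re u\geq 0$, local existence for (\ref{diffu}) is provided by Picard--Lindel\"of, and the estimate $\frac{d}{dt}\log|F_t(z)|^2=-2\Re u(F_t(z))\leq 0$ gives global existence for all $t\geq 0$ with $F_t(\D)\subset\D$. Given $\mu_0\in\cM_*$, one uses the flow $F_t$ to define candidate $\Sigma$-transforms for the measures $\mu_t$ and $\mu_{s,t}$ by integrating $u\circ F_{t'}^{-1}\circ \tilde\chi_{\mu_0}^{-1}$ along the flow; the assumed analytic continuation of this function to $\D$ with nonnegative real part, together with the multiplicative L\'evy--Khinchine description of infinitely divisible measures on $\T$ recalled above, is precisely what is needed to ensure that these candidate transforms are the $\Sigma$-transforms of measures in $\cM_*$. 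One then checks the compatibility relations (\ref{sg}) and that the associated subordination functions coincide with $F_{t-s}$ by the uniqueness clause of Theorem \ref{suboru}. The main obstacle lies in this last step of the converse, namely converting the purely analytic and flow-theoretic data back into genuine probability measures on $\T$, and then recognising that the free multiplicative process provided by the $\T$-analogue of the construction sketched in section \ref{freeincrpro} has exactly the prescribed transition kernels.
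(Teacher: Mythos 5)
The paper does not actually prove this statement: Theorem \ref{freemulevy} is recalled verbatim from \cite{B} (Theorems 4.6.1 and 4.6.2 there), so there is no internal proof to compare your attempt against. Your outline is nonetheless the right one and matches the strategy of the additive Theorem \ref{freenllevy} and of \cite{B}: Chapman--Kolmogorov plus the subordination and uniqueness clauses of Theorem \ref{suboru} give the composition semigroup with $F_t(0)=0$; the Berkson--Porta form $-zu(z)$ of generators fixing the origin (equivalently, your differentiation of $|F_t(z)|\le |z|$ at $t=0$) gives $\Re u\ge 0$; and in the converse the hypothesis on $u\circ F_t^{-1}\circ\tilde\chi_{\mu_0}^{-1}$ is exactly what makes $\log\Sigma_{\mu_{s,t}}=\int_s^t u\circ F_r^{-1}\circ\tilde\chi_{\mu_0}^{-1}\,dr$ the logarithm of the $\Sigma$-transform of a genuine freely infinitely divisible measure on $\T$, by the multiplicative L\'evy--Khinchine representation recalled just before the theorem. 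The one place your sketch stays schematic --- converting these candidate transforms into measures satisfying (\ref{sg}) and checking that the resulting process has transition functions $F_{t-s}$ --- is indeed where the technical work of \cite{B} lies (and note that this step needs $\mu_0\in\cM_*$ for the uniqueness clause of Theorem \ref{suboru} to apply, a restriction the paper effectively adopts by later taking $\mu_0=\delta_1$); but you have correctly identified the mechanism, so I see no wrong turn, only an unavoidable appeal to the reference for the details.
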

Functions like $u$ in the above theorem are called FUL2 L\'evy functions in \cite{B}.
In the following we consider the case where $\mu_0=\delta_1$.

Let $u$ be a FUL2 function. Let us change variables and put $z=e^{iw}$ with $\Im(w)>0$. Then the function $\tilde u(w)=-iu(e^{iw})$ is a Nevanlinna function which is periodic of period $2\pi$, and the differential equation (\ref{diffu}) becomes  
$\frac{\partial
\tilde F_t}{
\partial t}+\tilde F_t\tilde u(\tilde F_t)=0$, with $\tilde F_0(w)=w$. One has $\tilde F_t(w+2\pi)=\tilde F_t(w)+2\pi$. Introducing a primitive of $-1/\tilde u$ and reasoning as above we see that there must exist a $2\pi$-periodic Nevanlinna function $\psi$, with $\Psi$ a primitive of $-\psi$ such that $\Psi(\C^+)$ contains $\C^+$, however if $\psi$ is $2\pi$ periodic then the measure $(1+u^2)\nu(du)$ is also $2\pi$ periodic therefore the integral $\int_0^\infty (1+u^2)\nu(du)=\infty$, unless $\psi$ is constant.
We conclude:
\begin{theorem}
All FUL2 functions are constant.
\end{theorem}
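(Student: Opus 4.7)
The plan is to lift the problem to the upper half-plane via $z = e^{iw}$ and reduce to the additive case handled by Theorem \ref{freelk}. Given a FUL2 function $u$, set $\tilde u(w) = -i\,u(e^{iw})$ for $w\in\C^+$: since $\Re u \geq 0$ on $\D$, one has $\Im \tilde u \leq 0$, so $\tilde u$ is a Nevanlinna function, and it is $2\pi$-periodic. The multiplicative semigroup $F_t$ lifts, via $F_t(e^{iw}) = e^{i\tilde F_t(w)}$, to a semigroup $\tilde F_t$ of self-maps of $\C^+$ satisfying an additive-type ODE $\partial_t \tilde F_t + \tilde u(\tilde F_t) = 0$ together with the equivariance $\tilde F_t(w+2\pi) = \tilde F_t(w) + 2\pi$. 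A routine check shows that the FUL2 conditions on $u$ translate into the FAL2 conditions of Theorem \ref{freenllevy} for $\tilde u$ with initial distribution $\delta_0$: the normalisation $\tilde u(\zeta)/\zeta \to 0$ in $\Gamma_{\alpha,\beta}$ follows from the $2\pi$-periodicity of $\tilde u$ together with its boundedness far up in $\C^+$ (where $e^{iw} \to 0$ and $u$ is bounded), while the analytic extension condition for $\tilde u \circ \tilde F_t^{-1}$ is the direct transcription of the analogous condition on $u\circ F_t^{-1}$.

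By Theorem \ref{freelk}, either $\tilde u$ is constant, in which case $u$ is constant on $\D$ and we are done, or there exists a nonzero Nevanlinna function $\psi$ with primitive $\Psi$ of $-\psi$ satisfying $\Psi(\C^+) \supset \C^+$, such that $\tilde u = \psi\circ\Phi$ with $\Phi = \Psi^{-1}$. The next step is to propagate the periodicity of $\tilde u$ to $\psi$. Since $\Phi'=-1/\tilde u$ is $2\pi$-periodic, integration yields $\Phi(w+2\pi) - \Phi(w) = a$ for a constant $a\in\C$; a boundary analysis of $\Phi$ on $\bf R$ (where $\tilde u$ is real off the support of its Nevanlinna measure, so $\Phi'$ is real there) forces $a$ to be real, and $a\neq 0$ since $\Phi$ is univalent. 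Rewriting $\tilde u(w+2\pi) = \tilde u(w)$ as $\psi(\Phi(w) + a) = \psi(\Phi(w))$ and analytically continuing from the open image $\Phi(\C^+)$ shows that $\psi$ itself is $a$-periodic on $\C^+$.

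The contradiction now comes from Lemma \ref{C+}: since $\Psi(\C^+) \supset \C^+$, one has $\int_0^\infty x^2\,d\nu_\psi(x) <\infty$, and because $\nu_\psi$ is finite this gives $\int_0^\infty (1+x^2)\,d\nu_\psi < \infty$. On the other hand, the $a$-periodicity of $\psi$ implies that its boundary measure $(1+x^2)\,d\nu_\psi$, obtained as the weak limit of $-\pi^{-1}\Im\psi(x+i\varepsilon)\,dx$ as $\varepsilon\to 0$, is $a$-periodic on $\bf R$; a periodic positive measure of positive period with finite mass on $[0,\infty)$ must vanish. Hence $\nu_\psi = 0$, and the periodicity also forces the linear coefficient $\alpha_\psi$ of the Nevanlinna representation of $\psi$ to vanish, so $\psi$ is a real constant. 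Then $\tilde u = \psi\circ\Phi$ is constant, and therefore so is $u$. The step I expect to be the most delicate is the verification that the period $a$ is real, which requires care with the boundary behaviour of $\Phi$ on the real axis; once this is in hand, the argument is a clean combination of Theorem \ref{freelk} and Lemma \ref{C+}.
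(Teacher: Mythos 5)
Your proof follows essentially the same route as the paper: conjugate by $z=e^{iw}$ to get a $2\pi$-periodic Nevanlinna function $\tilde u$, feed it into the additive classification (Theorem \ref{freelk}), deduce that the resulting $\psi$ is periodic with a nonzero real period, and contradict Lemma \ref{C+} because a nonzero positive periodic measure has infinite mass on $[0,\infty)$. You are in fact more careful than the paper on one point the paper glosses over, namely that the period of $\psi$ is $a=\Phi(w+2\pi)-\Phi(w)$ rather than $2\pi$ itself.

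The one sub-step whose justification does not hold as written is your argument that $a$ is real: you invoke reality of $\tilde u$ ``off the support of its Nevanlinna measure,'' but that measure may have full support, so $\Phi'$ need not be real anywhere on $\R$. This is repairable without boundary analysis: if $\Im(a)\neq 0$, the identity $\psi(z+a)=\psi(z)$ (valid on $\Phi(\C^+)$, hence on the half-plane $\C^+\cap(\C^+-a)$, using $\Phi(\C^+)+a=\Phi(\C^+)$) lets you continue $\psi$ to an entire function omitting $\C^+$, which is constant by Liouville's theorem; so either $\psi$ is already constant or $a$ is real, and $a\neq 0$ by injectivity of $\Phi$. With that patch the argument is complete and matches the paper's.
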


\subsection{Multiplicative free convolution on the positive half-line}
\subsubsection{} Let 
$\mu$ be a probability measure on
$\bf R_+$, different from
$\delta_0$, and define 
$$\eta_{\mu}(z)=\int_{\R_+}{z\xi\over 1-z\xi}d{\mu}(\xi)$$ 
This function is analytic on $\C\setminus \bf R_+$, and $\eta_{\mu}(\bar
z)=\bar\eta_{\mu}(z)$ for $z\in \C\setminus \bf R_+$. The function 
 ${\eta_{\mu}\over 1+\psi_{\mu}}$ is univalent on $i\bf C^+$, its image
contains a neighbourhood of the interval $]\mu(\{0\})-1,0[$ in $\C$.  Let 
$\tilde{\chi}_{\mu}$ be the right inverse of this function on the image 
${\eta_{\mu}\over 1+\eta_{\mu}}(i\bf C^+)$. We define the
$\Sigma$-transform of $\mu$ as the function 
$\Sigma_{\mu}(z)= \frac{1}{z}\tilde\chi_{\mu}(z)$ defined on 
${\eta_{\mu}\over 1+\eta_{\mu}}(i\C^+)$.

 Let $\mu$ and $\nu$ be two probability measures on $\bf R_+$,
different from $\delta_0$ and let $S$ and $T$ be free random variables, in
some non-commutative probability space, with respective distributions
$\mu$ and $\nu$, then the distribution of $S^{1\over 2}TS^{1\over 2}$ which
is also the distribution of $T^{1\over 2 }ST^{1\over 2}$, is the free
multiplicative convolution of $\mu$ and $\nu$, denoted by $\mu\boxtimes
\nu$, and one has $\Sigma_{\mu\boxtimes
\nu}=\Sigma_{\mu}\Sigma_{\nu}$ on some neighbourhood of the interval
$]-\varepsilon,0[$, for some $\varepsilon>0$.

\subsubsection{}
The Markov and subordination property of the free multiplicative convolution on the positive half-line is given by the following Theorem 3.6 from \cite{B}.
\begin{theorem}\label{suborp}
Let
$(A,\tau)$ be a non commutative probability space, 
$B\subset A$ be a von Neumann subalgebra, and $S,T\in \tilde A_{sa}$ such
that
$S$ and $T$ are positive, with  respective distributions $\mu$ and
$\nu$, different from $\delta_0$, one has  $S\in \tilde B_{sa}$ and  $T$ is
free with
$B$, then 
there exists a Feller Markov kernel 
$\cK=k(u,dv)$ on
$\R_+\times \R_+$ and an analytic function $F$, defined on $\C\setminus
\R_+$, such that
\begin{enumerate}
\item for any bounded Borel function $f$ on $S$ one has
$\tau(f(S^{1/2}TS^{1/2})\vert B)=\cK f(S).$
 \item 
 $F(\zeta)\in\C^+$, $F(\bar \zeta)=\bar F(\zeta)$ and $\Arg\,
(F(\zeta))\geq\Arg\, (\zeta)$ for 
$\zeta\in
\C^+$.
\item for all $\zeta\in\C^+$ one has
$\int_{\R_+}{\zeta v\over 1-\zeta v}k(u,dv)={F(\zeta)u\over_
1-F(\zeta)u}.$ 
\item  For all $\zeta\in\C^+$ one has
$\eta_{\mu}(F(\zeta))=\eta_{\mu\boxtimes
\nu}(\zeta).$
\end{enumerate}
The
map
$F$ is uniquely determined by the properties $(2)$ and $(4)$.
\end{theorem}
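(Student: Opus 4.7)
The plan is to follow the strategies used for the analogous additive case (Theorem~\ref{subor}) and unitary multiplicative case (Theorem~\ref{suboru}), transposing to the positive half-line setting where $\C\setminus\R_+$ plays the role of the upper half-plane, the M\"obius kernel $\frac{\zeta v}{1-\zeta v}$ replaces the Cauchy kernel $(\zeta-u)^{-1}$, and the wedge condition $\Arg F(\zeta)\ge\Arg\zeta$ replaces $\Im F(\zeta)\ge\Im\zeta$.

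First I would construct the Markov kernel. Since $T$ is free with $B$ and $S\in\tilde B_{sa}$, the alternating-centred-products characterisation of freeness implies that for every bounded Borel $f$ and every $b\in B$, the scalar $\tau(b\,f(S^{1/2}TS^{1/2}))$ depends on $b$ only through its conditional expectation onto $W^*(S)$; equivalently, $\tau(f(S^{1/2}TS^{1/2})\mid B)$ lies in the abelian algebra $W^*(S)\subset B$. Writing this conditional expectation as $h_f(S)$ via the spectral calculus, the map $f\mapsto h_f$ is a positive, unital, $\sigma$-weakly continuous Markov operator on bounded Borel functions on $\R_+$, and a disintegration argument yields a Feller kernel $k(u,dv)$ on $\R_+\times\R_+$ verifying (1).

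The heart of the proof is (3), the rigid M\"obius form of $\cK g_\zeta$ for $g_\zeta(v)=\frac{\zeta v}{1-\zeta v}$. I would establish the operator-valued identity
\begin{equation*}
E_B\bigl[(1-\zeta S^{1/2}TS^{1/2})^{-1}\bigr] = (1 - F(\zeta) S)^{-1}
\end{equation*}
with a \emph{scalar} analytic function $F$ of $\zeta\in\C\setminus\R_+$, by expanding both sides as power series in $\zeta$: the conditional moments $E_B[(S^{1/2}TS^{1/2})^n]$ reduce, through the trace property and the alternating-centred characterisation of freeness of $T$ from $B$, to polynomials in $S$ whose generating series resums to the geometric form on the right. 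Subtracting the identity and applying the spectral theorem give $\cK g_\zeta(u)=\frac{F(\zeta)u}{1-F(\zeta)u}$, which is (3). Integrating (3) against $\mu$ then yields $\eta_{\mu\boxtimes\nu}(\zeta)=\tau(g_\zeta(S^{1/2}TS^{1/2}))=\int\frac{F(\zeta)u}{1-F(\zeta)u}\,d\mu(u)=\eta_\mu(F(\zeta))$, which is (4).

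For property (2), the symmetry $F(\bar\zeta)=\overline{F(\zeta)}$ comes from $\tau(X^*)=\overline{\tau(X)}$ and the reality of the functional calculus. That $F(\C^+)\subset\C^+$ and $\Arg F(\zeta)\ge\Arg\zeta$ expresses the positivity of $S$: for each $u>0$, $\zeta\mapsto\cK g_\zeta(u)$ is the $\eta$-transform of the probability measure $k(u,\cdot)$ on $\R_+$, and the characterisation of such $\eta$-transforms (analytic on $\C\setminus\R_+$, with $\Arg\eta(\zeta)\ge\Arg\zeta$) together with the M\"obius relation in (3) forces $F$ into the required wedge. Uniqueness under (2) and (4) follows from the $\Sigma$-transform theory: $\eta_\mu$ is univalent on the image of $i\C^+$ under $\eta_\mu/(1+\eta_\mu)$, so two candidates $F_1,F_2$ satisfying (4) with values in that image must agree. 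I expect the main obstacle to be the scalar-valued rigidity in the displayed operator subordination formula: establishing the $(1-F(\zeta)S)^{-1}$ form, rather than a more general $B$-valued expression, is what encodes both the Markov property and the centrality of $F(\zeta)$, and requires the full strength of freeness of $T$ from $B$ rather than merely from $S$.
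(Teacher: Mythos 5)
The paper does not prove Theorem~\ref{suborp} at all: it is quoted verbatim as Theorem 3.6 of \cite{B}, so there is no internal argument to compare yours against, and the relevant benchmark is the proof in \cite{B}, whose overall strategy (conditional expectation of the resolvent lands in $W^*(S)$, rigid subordination form, then the analytic properties and uniqueness of $F$) your outline does follow.

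Measured against that, your proposal has a genuine gap at exactly the point you yourself flag as the main obstacle: the scalar rigidity of the identity $\tau\bigl((1-\zeta S^{1/2}TS^{1/2})^{-1}\,\vert\, B\bigr)=(1-F(\zeta)S)^{-1}$. You assert that the conditional moments $\tau\bigl((S^{1/2}TS^{1/2})^{n}\,\vert\, B\bigr)$, computed from freeness, ``resum to the geometric form,'' but this is precisely the content of the theorem and is not a formal resummation: each conditional moment is a polynomial in $S$ whose coefficients mix moments of $T$ with moments of $S$ of all orders, and nothing a priori forces the generating series to collapse to a resolvent $(1-wS)^{-1}$ for a single \emph{scalar} $w=F(\zeta)$. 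In \cite{B} this step is carried out by actually constructing $F(\zeta)$ (via a fixed-point equation coming from the subordination relation) and then verifying $\tau\bigl(b\,(1-\zeta S^{1/2}TS^{1/2})^{-1}\bigr)=\tau\bigl(b\,(1-F(\zeta)S)^{-1}\bigr)$ for all $b\in B$ using the alternating-centred expansion; without that, your argument yields only \emph{some} $W^*(S)$-valued conditional expectation, hence neither (3) nor, downstream, (4) and the wedge bound in (2). A second, related gap is that $S,T\in\tilde A_{sa}$ are merely affiliated, so the power-series-in-$\zeta$ expansion you invoke is unavailable for unbounded operators; \cite{B} deals with this by truncation and by working directly with resolvents, and some such device is needed here. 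The remaining items --- deriving (4) by integrating (3) against $\mu$, reading (2) off the fact that $\zeta\mapsto\cK g_\zeta(u)$ is the $\eta$-transform of the probability measure $k(u,\cdot)$ on $\R_+$, and uniqueness from injectivity of $\eta_\mu$ on the range prescribed by (2) --- are correct and consistent with \cite{B}.
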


\subsubsection{}
 Again one has a L\'evy-Khinchine formula, where freely  infinitely
divisible probability measures on $\bf R_+$ are characterised as having 
$\Sigma$-transforms of the form 
$\Sigma_{\mu}(z)=\exp(v(z))$, where $v$ is an analytic function on
$\C\setminus\R_+$, with $v(\bar z)=\bar v(z)$, and 
$v(\bf C^+)\subset\bf C^-\cup\bf R$. Such functions have the representation 
$$v(z)=az+b+\int_0^{+\infty}\frac{1+tz}{ z-t}d\nu(t)$$ for some
real numbers $a\leq 0$ and $b$, and $\nu$ a finite  positive
measure on
$\R_+$.
The analogue of Theorems  \ref{freenllevy} and  \ref{freemulevy} is the following.
\begin{theorem}
 Let $\mu_t,t\geq 0$ and 
$(\mu_{s,t})_{s<t\in\bf R_+}$  be families of probability measures on $\bf R_+$ satisfying 
\begin{equation}\label{sg}
\mu_s\boxtimes\mu_{s,t}=\mu_t;\quad \mu_{s,t}\boxplus\mu_{t,u}=\mu_{s,u}
\end{equation}
for all $s<t<u$. Let 
$(\cK_{s,t})_{s<t\in\bf R_+}$ be the corresponding Markov transition functions on $\bf R_+$.
Assume that the kernels are time homogeneous, then 
 the kernels
$\cL_{t}\equiv\cK_{0,t}$ for $t\geq 0$, 
 form a Feller Markov
semi-group. Let $F_{s,t}$ be the analytic functions  associated to the
kernels $\cK_{s,t}$ by Theorem \ref{suborp}.  The maps $F_t\equiv
F_{0,t}$ for 
$t\geq 0$, form a semigroup of analytic maps on $\C\setminus\R_+$, such
that
 $t\mapsto \Arg \, F_t(z)$ is an increasing  map for $z\in\C^+$. There
exists an   analytic function $v$ on $\C\setminus\R_+$, 
$\C^-\cup\R$, 
such that $v(\bar z)=\bar v(z)$ for $z\in\C^+$, $v(\C^+)\subset
\C^-\cup\R$,  and the maps 
$F_t$, for $t\geq 0$, satisfy the differential equation ${\partial
F_t\over
\partial t}+F_tv(F_t)=0$.\item  Let  $v$ be an   analytic function 
 on $\C\setminus \R_+$, such that $v(\C^+)\subset\C^-\cup\R$, and $v(\bar
z)=\bar v(z)$ for all 
$z\in\C^+$, 
 and let
$F_t$, for $t\geq 0$, be the solution of the differential equation
${\partial
F_t\over
\partial t}+F_tu(F_t)=0$, with $F_0(z)=z$, then there exists
 a free multiplicative L\'evy process of the second kind, with initial
distribution
$\mu_0$, with associated semi-group of maps $(F_t)_{t\in\R_+}$, if and
only if, for every
$t>0$ the function
$v\circ F_t^{-1}\circ {\tilde{\chi}}_{\mu_0}^{-1}$ has an analytic
continuation to
$\C\setminus\R_+$, such that $v(\bar z)=\bar v(z)$, and
$v(\C^+)\subset\C^-\cup\R$. 
\end{theorem}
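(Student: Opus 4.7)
The argument follows the blueprint of Theorem \ref{freenllevy} and its reformulation Theorem \ref{freelk} in the additive case, with $\C\setminus\R_+$ in place of $\C^+$ and the fixed direction $0$ replacing the boundary fixed point $\infty$. Chapman-Kolmogorov together with the time-homogeneity hypothesis yields that $\cL_t := \cK_{0,t}$ is a Feller semigroup, and Theorem \ref{suborp} promotes this to a one-parameter composition semigroup $(F_t)_{t\geq 0}$ of analytic self-maps of $\C\setminus\R_+$, conjugation symmetric and satisfying $\Arg F_t(z)\geq \Arg z$ on $\C^+$. Feller continuity gives enough regularity in $t$ to define the infinitesimal generator $\xi(z) := \lim_{t\to 0^+}(F_t(z)-z)/t$, and differentiating $F_{s+t}=F_s\circ F_t$ at $s=0$ yields $\partial_t F_t = \xi\circ F_t$. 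Differentiating the inequality $\Im\log(F_t(z)/z)\geq 0$ at $t=0$ gives $\Im(\xi(z)/z)\geq 0$ for $z\in\C^+$, so that $v(z) := -\xi(z)/z$ is an analytic function on $\C\setminus\R_+$ mapping $\C^+$ into $\C^-\cup\R$ and commuting with conjugation, and the ODE takes the required form $\partial_t F_t + F_t\,v(F_t) = 0$.

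\textbf{Converse part.} Given $v$ with the listed properties, introduce a primitive $\Phi$ of $-1/(zv(z))$ on $\C\setminus\R_+$. Mirroring sections \ref{starup} and \ref{direct}, $\Phi$ conjugates the flow to a horizontal translation, so $F_t(z):=\Psi(\Phi(z)+t)$ with $\Psi=\Phi^{-1}$ is globally defined on $\C\setminus\R_+$, satisfies the semigroup law, and respects the argument-increase property; Theorem \ref{suborp} then produces an associated semigroup of Feller Markov kernels. It remains to identify when these kernels correspond to a genuine free multiplicative L\'evy process with initial distribution $\mu_0$. The subordination identity $\eta_{\mu_0}(F_t(z)) = \eta_{\mu_t}(z)$ forces the candidate increment measure $\mu_{0,t}$ to satisfy $\Sigma_{\mu_{0,t}} = \Sigma_{\mu_t}/\Sigma_{\mu_0}$, and unfolding this requirement through the definition of $\tilde\chi_{\mu_0}$ produces exactly the stated criterion that $v\circ F_t^{-1}\circ\tilde\chi_{\mu_0}^{-1}$ extend analytically to $\C\setminus\R_+$, sending $\C^+$ into $\C^-\cup\R$ with the conjugation symmetry.

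\textbf{Main obstacle.} I expect this last equivalence to be the principal difficulty. Necessity is a direct unwinding of the subordination identities; sufficiency requires reconstructing the probability measure $\mu_{0,t}$ from its candidate $\Sigma$-transform and verifying that $\mu_0 \boxtimes \mu_{0,t} = \mu_t$. This is more delicate than the additive argument in section \ref{converse}, because inversion of $\tilde\chi$ and the description of its image are more subtle than for the Voiculescu transform; closing the loop ultimately requires the full multiplicative L\'evy-Khinchine representation $\Sigma_\mu = \exp(v)$ recalled above.
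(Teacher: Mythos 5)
First, for calibration: the paper itself contains no proof of this statement. Section 5 announces that it merely recalls Theorems 3.5, 3.6, 4.6.1 and 4.6.2 of \cite{B} and only sketches the genuinely new material, and this theorem is one of the recalled ones, stated without any argument. So your proposal cannot be matched against an in-paper proof, only assessed on its own. Your direct part follows the right template: the generator of the composition semigroup is $\xi(z)=-zv(z)$, and differentiating $\Arg F_t(z)\geq\Arg z$ at $t=0^+$ does give $\Im(\xi(z)/z)\geq 0$, hence $v(\C^+)\subset\C^-\cup\R$ and the conjugation symmetry. The one step you assert rather than prove is the existence of $\xi(z)=\lim_{t\to 0^+}(F_t(z)-z)/t$ as a holomorphic function: Feller continuity of the kernels gives only continuity of $t\mapsto F_t(z)$, and upgrading a continuous one-parameter composition semigroup of holomorphic self-maps to a differentiable one is a real (Berkson--Porta type) argument, not a formality.

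The genuine gap is in the converse, and you have named it yourself without closing it. The assertion to be proved is an \emph{equivalence} between the existence of the increment measures $\mu_{s,t}$ and the analytic continuation property of $v\circ F_t^{-1}\circ\tilde\chi_{\mu_0}^{-1}$; writing that ``unfolding this requirement \dots\ produces exactly the stated criterion'' states the conclusion, it is not an argument, and the identity $\Sigma_{\mu_{0,t}}=\Sigma_{\mu_t}/\Sigma_{\mu_0}$ by itself does not produce the criterion, because the criterion involves $v$ and not the measures. The missing mechanism is a differentiation in $t$. Subordination gives $\eta_{\mu_t}=\eta_{\mu_0}\circ F_t$, hence $\tilde\chi_{\mu_t}=F_t^{-1}\circ\tilde\chi_{\mu_0}$ and $\log\Sigma_{\mu_t}(z)=\log F_t^{-1}(\tilde\chi_{\mu_0}(z))-\log z$; since $\partial_t F_t^{-1}(w)=F_t^{-1}(w)\,v\bigl(F_t^{-1}(w)\bigr)$, one gets that $\partial_t\log\Sigma_{\mu_t}$ is exactly the criterion function (up to which of the two mutually inverse maps the notation $\tilde\chi_{\mu_0}^{-1}$ designates, exactly as $F_{\mu_0}^{-1}$ enters Theorem \ref{freenllevy}). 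Consequently $\log\Sigma_{\mu_{s,t}}=\log\Sigma_{\mu_t}-\log\Sigma_{\mu_s}$ is the integral over $[s,t]$ of these functions: if each of them continues analytically to $\C\setminus\R_+$ with the stated range and symmetry, so does the integral, and it is the multiplicative L\'evy--Khinchine representation $\Sigma_\mu=\exp(v)$ recalled just before the theorem that converts this continued function back into an actual ($\boxtimes$-infinitely divisible) probability measure on $\R_+$ with $\mu_s\boxtimes\mu_{s,t}=\mu_t$ --- this is the reconstruction step you defer to, and it is where the proof actually lives. Necessity goes the other way: if the $\mu_{s,t}$ exist, then $(t-s)^{-1}\log\Sigma_{\mu_{s,t}}$ converges as $t\downarrow s$ to the criterion function, and a locally uniform limit of analytic functions mapping $\C^+$ into $\C^-\cup\R$ is again one. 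Without this argument the converse --- which is the entire content of the second half of the statement --- is not established.
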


\subsubsection{}
We now determine all FPL2 functions, in the case $\mu_0=\delta_1$.

We change variables and put $z=-e^w$ where $z\in\C\setminus\R_+$ and
 $w\in \mathcal S$ where $\mathcal S$ is the symmetric horizontal strip
$\mathcal S=\{w|\Im(w)\in]-\pi,\pi[\}$. Let $v$ be a FPL2 function and define $\tilde v(w)=v(-e^{w})$.
Then $v$ is analytic in the strip $\mathcal S$, satisfies $v(\bar w)=\bar v(w)$ and takes values with positive imaginary part on $\mathcal S\cap \C^+$. With $F_t(-e^w):=-\exp(\tilde F_t(w))$ the equation ${\partial
F_t(z)\over
\partial t}+F_tv(F_t(z))=0$ becomes  ${\partial
\tilde F_t(w)\over
\partial t}+\tilde v(\tilde F_t(w))=0$.
The function $\tilde v$ has at most one zero $\omega_0$, on the real line.
 Let $\tilde V(w)$ be a primitive of $-1/\tilde v$ on $\mathcal S\setminus ]-\infty,\omega_0]$ such that  $\tilde V$ takes real values on $]\omega_0,+\infty]$. One has $\tilde V(\bar w)=\bar{\tilde V}(w)$, moreover   $\tilde V$ is univalent on $\mathcal S\setminus ]-\infty\omega_0]$ and the domain $\Omega=\tilde V(\mathcal S)$ satisfies $\bar \Omega=\Omega$ and $\Omega+t\subset\Omega$
for $t\geq 0$.
Let $\tilde W$ be the inverse of $\tilde V$ then one has $\tilde F_t(w)=\tilde W(\tilde V(w)+t)$. As in section \ref{converse} one can extend the map $\tilde W$ to a univalent function on the domain $\Omega_\infty=\cup_{t\in\bf R}(\Omega+t)$ which is either the whole complex plane or a horizontal strip, symmetric with respect to the real axis. The function $\tilde W$ satisfies $\bar{\tilde W}(z)=\tilde W(\bar z)$ moreover 
$\Im(\tilde W'(w))\leq 0$ for $\Im(w)>0$. From these considerations we deduce the analogue of Theorem \ref{freelk}:

\begin{theorem}
Let $v$ be a FPL2 function, then either $v$ is a constant, or there exists a univalent function $U$ defined on a symmetric horizontal strip $\mathcal T$,  such that $U(\mathcal T)$ contains $\mathcal S$, one has $U(\bar z)=\bar U(z)$, and  $U'=-u$, with $\Im(u(z))>0$ for $\Im(z)>0$, and $v=u\circ U^{-1}$. Conversely, for any function $U$ satisfying the above requirements, the function
$v=u\circ U^{-1}$  is a FPL2 function.
\end{theorem}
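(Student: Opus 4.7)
The plan is to mirror the argument of Section~\ref{direct}--\ref{converse} (the additive case) by first uniformizing the domain $\C\setminus\R_+$ of $v$ by the symmetric horizontal strip $\mathcal S$ via the covering map $w\mapsto z=-e^{w}$. Defining $\tilde v(w)=v(-e^{w})$ and $\tilde F_{t}(w)$ by $F_{t}(-e^{w})=-e^{\tilde F_{t}(w)}$, the FPL2 differential equation $\partial_{t}F_{t}+F_{t}v(F_{t})=0$ becomes $\partial_{t}\tilde F_{t}+\tilde v(\tilde F_{t})=0$ with $\tilde F_{0}=\mathrm{id}$. The function $\tilde v$ is analytic on $\mathcal S$, satisfies $\tilde v(\bar w)=\overline{\tilde v(w)}$, and has $\Im\tilde v>0$ on $\mathcal S\cap\C^{+}$ (because $-e^{w}\in\C^{-}$ there and $v(\bar z)=\overline{v(z)}$ with $v(\C^{+})\subset\C^{-}\cup\R$).

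Next, assuming $v$ not constant (so $\tilde v\not\equiv0$), I would introduce a primitive $\tilde V$ of $-1/\tilde v$, the analog of the map $\Phi$ of Section~\ref{direct}. Since $\tilde v$ has at most one real zero $\omega_{0}$ and has imaginary part of constant sign on each component of $\mathcal S\setminus\R$, the argument of Lemma~\ref{univalent} adapts verbatim on $\mathcal S\setminus{]-\infty,\omega_{0}]}$ and $\tilde V$ is univalent there; normalize it to be real on $(\omega_{0},+\infty)$ so that $\tilde V(\bar w)=\overline{\tilde V(w)}$. The image $\Omega=\tilde V(\mathcal S)$ is starlike at $-\infty$, i.e.\ $\Omega+t\subset\Omega$ for all $t\geq 0$, and the flow satisfies $\tilde F_{t}(w)=\tilde W(\tilde V(w)+t)$ where $\tilde W=\tilde V^{-1}$.

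Now I would extend $\tilde W$ to $\Omega_{\infty}=\bigcup_{t\in\R}(\Omega+t)$ exactly as in Section~\ref{converse}: for $z\in\Omega-t$ with $t\geq 0$ set $\tilde W(z)=\tilde F_{-t}(\tilde W(z+t))$, with well-definedness and univalence guaranteed by the FPL2 hypothesis (the analytic continuation assumption on $v\circ F_{t}^{-1}$ is precisely what makes $F_{-t}$ analytic on the needed domain). The domain $\Omega_{\infty}$ is open, connected, invariant under all real translations and under conjugation, hence must be either $\C$ or a symmetric horizontal strip. In the first case $\tilde W$ is entire univalent, therefore affine, and $\tilde v$ is constant, giving $v$ constant. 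Otherwise $\Omega_{\infty}$ is a symmetric strip $\mathcal T$ and we set $U=\tilde W\colon\mathcal T\to\C$. Then $U(\mathcal T)\supset\Omega\supset\mathcal S$ (the last inclusion coming from the translation invariance), $U(\bar z)=\overline{U(z)}$, and $U'=-u$ where $u(z)=\tilde v(\tilde W(z))$ has $\Im u>0$ on $\C^{+}$ because $\tilde v$ did and $\tilde W$ preserves the upper/lower half-plane structure. Undoing the change of variables gives $v=u\circ U^{-1}$.

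For the converse, given $U$ with the listed properties, set $u=-U'$, $\tilde v=u\circ U^{-1}$, $\tilde F_{t}(w)=U(U^{-1}(w)+t)$ on $\mathcal S$, and transfer back through $z=-e^{w}$ to obtain $v$ and $F_{t}$; the required analytic continuation of $v\circ F_{t}^{-1}$ is visible directly because $v\circ F_{t}^{-1}=u\circ U^{-1}\circ F_{t}^{-1}$ reduces via the change of variables to $\tilde v$ shifted, which is analytic on $\mathcal S$. I expect the main obstacle to be the careful bookkeeping around the branch cut ${]-\infty,\omega_{0}]}$ and the zero $\omega_{0}$ of $\tilde v$: one must show that the extension of $\tilde W$ across this cut remains single-valued and univalent, and that the final domain $\Omega_{\infty}$ really is a strip rather than something more pathological. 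This parallels the discussion of the extreme behaviors in Lemma~\ref{NevC+} for the additive case, but here the periodicity that collapsed the circle case (Theorem~5.3) is absent, which is exactly why nontrivial FPL2 functions exist.
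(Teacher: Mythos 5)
Your proposal follows essentially the same route as the paper: the change of variables $z=-e^{w}$ onto the symmetric strip $\mathcal S$, the primitive $\tilde V$ of $-1/\tilde v$ cut along $]-\infty,\omega_0]$, the starlike image $\Omega$, the extension of $\tilde W$ to $\Omega_\infty=\bigcup_{t\in\R}(\Omega+t)$, and the dichotomy $\Omega_\infty=\C$ (constant case) versus a symmetric horizontal strip. The only quibble is the chain $U(\mathcal T)\supset\Omega\supset\mathcal S$, which conflates domain and image and should read $U(\mathcal T)\supset U(\Omega)=\mathcal S\setminus{]-\infty,\omega_0]}$, with the cut recovered via the extension; otherwise the argument matches the paper's (itself only sketched) proof.
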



\begin{thebibliography}{99}
\bibitem{BP}\textsc{H. Bercovici, V. Pata}, {\it 
Stable laws and domains of attraction in free probability theory,} Annals of Mathematics,
{\bf 149} (1999), 1023--1060.
\bibitem{BV1} \textsc{H. Bercovici, D. Voiculescu}, {\it 
L\'evy-Hin\u{c}in type theorems for multiplicative and additive free convolution,}
Pacific J. Math. {\bf 153} (1992), 217--248.
\bibitem{BV2} \textsc{H. Bercovici, D. Voiculescu}, {\it 
Free convolution of measures with unbounded support,} Indiana University Mathematics Journal {\bf 42} (1993), 733--773.
\bibitem{B} \textsc{P. Biane}, {\it 
Processes with free increments,} Math. Z. {\bf 227}, (1998), 143--174.
\bibitem{P}\textsc{C. Pommerenke}, {\it Univalent functions. With a chapter on quadratic differentials by Gerd Jensen.} Studia Mathematica/Mathematische Lehrbücher, Band XXV. Vandenhoeck \& Ruprecht, G\"ottingen, 1975. 
\bibitem{V} \textsc{ D. Voiculescu}, {\it 
Addition of certain non-commuting random variables,} Jour. Funct. Anal. {\bf 66} (1986), 323--346.
\end{thebibliography}
\end{document}